\newtheorem{thm}{Theorem}
\newtheorem{lem}{Lemma}
\newtheorem{cor}{Corollary}
\newtheorem{obs}{Observation}
\theoremstyle{definition}
\newtheorem{defn}{Definition}
\newtheorem*{lem'}{Lemma}
\theoremstyle{remark}
\newtheorem{case}{Case}[thm]
\newtheorem{rem}{Remark}[section]
\author{Santanu Mondal, Krishnendu Paul, Shameek Paul
\thanks{E-mail addresses:   \texttt{santanu.mondal.math18@gm.rkmvu.ac.in, krishnendu.p.math18@gm.rkmvu.ac.in, shameek.paul@rkmvu.ac.in}}}
\affil{\small Ramakrishna Mission Vivekananda Educational and Research Institute, Belur, Dist. Howrah, 711202, India}
\date{}
\begin{document}
\baselineskip=14.5pt

\title {On a different weighted zero-sum constant}

\maketitle

\begin{abstract}
For a finite abelian group $(G,+)$, the constant $C(G)$ is defined to be the smallest natural number $k$ such that any sequence in $G$ having length $k$ will have a subsequence of consecutive terms whose sum is zero. For a subset $A\subseteq\mathbb Z_n$, the constant $C_A(n)$ is the smallest natural number $k$ such that any sequence in $G$ having length $k$ has an $A$-weighted zero-sum subsequence of consecutive terms. We determine the value of $C_A(n)$ for some particular weight-sets $A$.
\end{abstract}

\bigskip

Keywords: Weighted zero-sum constant, Davenport constant, units in $\mathbb Z_n$

\vspace{.7cm}

\section{Introduction}\label{0}

For a finite set $S$, we denote the number of elements in $S$ by $|S|$. For $a,b\in\mathbb Z$, let $[a,b]$ denote the set $\{k\in\mathbb Z:a\leq k\leq b\}$.  We begin with the following well-known result (see \cite{AR1}, for instance).

\begin{thm}\label{a}
Let $(G\,,\cdot)$ be a finite group with $|G|=n$ and $k\geq n$. Then given any sequence $S=(x_1,\ldots,x_k)$ in $G$ of length $k$, there exist $i,j\in [1,k]$ such that $i\leq j$ and $x_ix_{i+1}\ldots x_j=e$ where $e$ is the identity element of $G$. 
\end{thm}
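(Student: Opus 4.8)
The plan is to use the standard pigeonhole argument on partial products. First I would define the partial products $p_0 = e$ and $p_m = x_1 x_2 \cdots x_m$ for $m \in [1,k]$, giving a list $p_0, p_1, \ldots, p_k$ of $k+1$ elements of $G$. Since $k \geq n = |G|$, we have $k+1 > n$, so by the pigeonhole principle two of these partial products must coincide: there exist indices $0 \leq a < b \leq k$ with $p_a = p_b$.

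From $p_a = p_b$ I would then extract the desired consecutive subproduct. Writing $p_b = p_a \cdot (x_{a+1} x_{a+2} \cdots x_b)$, we can cancel $p_a$ (valid in a group) to conclude $x_{a+1} x_{a+2} \cdots x_b = e$. Setting $i = a+1$ and $j = b$, we have $1 \leq i \leq j \leq k$ and $x_i x_{i+1} \cdots x_j = e$, which is exactly the claim.

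I do not expect a genuine obstacle here, since this is a classical result; the only points requiring care are bookkeeping ones — ensuring the index $p_0$ corresponding to the empty product is included (this is what gives $k+1$ values rather than $k$, and is essential when one of the coinciding partial products is $p_0$ itself, i.e. when a prefix $x_1 \cdots x_j$ is already trivial), and invoking the group structure (cancellation / existence of inverses) rather than just a monoid structure. If one wanted to avoid the empty product, an alternative is to note that if no $p_m$ equals $e$ then the $k \geq n$ values $p_1, \ldots, p_k$ lie in $G \setminus \{e\}$... but that set has only $n-1$ elements only when all $p_m \neq e$, so pigeonhole still forces a collision $p_a = p_b$ with $a<b$; either framing works, and I would present the cleaner $p_0, \ldots, p_k$ version.
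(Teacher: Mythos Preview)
Your argument is correct and is essentially the same as the paper's: the paper defines $y_i=x_1\cdots x_i$ for $i\in[1,k]$, handles the case that some $y_i=e$ separately, and otherwise applies pigeonhole to force $y_i=y_j$ and cancels. Your inclusion of $p_0=e$ simply absorbs that case split into the pigeonhole step, which is the cleaner variant you already noted.
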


\begin{proof}
 Let $S=(x_1,\ldots,x_k)$ be a sequence in $G$ and $y_i=x_1x_2\ldots x_i$ for each $i\in [1,k]$. If some $y_i=e$, we are done. Else by the pigeonhole principle, there exist $i,j\in [1,k]$ such that $i<j$ and $e=y_i^{-1}y_j=x_{i+1}x_{i+2}\ldots x_j$. 
\end{proof}

\begin{defn}
A sequence $S=(x_1,\ldots,x_\ell)$ in $G$ is called a product-identity sequence if $x_1x_2\ldots x_\ell=e$.
\end{defn}

\begin{defn}
For a finite group $G$, the {\it Davenport constant} $D(G)$ is defined to be the smallest natural number $k$ such that any sequence of $k$ elements in $G$ has a product-identity subsequence. 
\end{defn}

From Theorem \ref{a}, we see that for a finite group $G$ we have $D(G)\leq |G|$. A weighted generalization of the Davenport constant was introduced in \cite{AC} for finite abelian groups. It was earlier introduced in \cite{AR} for finite cyclic groups, following a similar generalization in \cite{ACF}. We give a generalization of the weighted Davenport constant to finite $R$-modules. 
For the rest of this section, $R$ will be a ring with unity and $A$ will be a non-empty subset of $R$.

\begin{defn}
Given an $R$-module $M$ and $A\subseteq R$, a sequence $S=(x_1,\ldots,x_k)$ in $M$ is called an {\it $A$-weighted zero-sum sequence} if for each $i\in [1,k]$, there exists $a_i\in A$ such that $a_1x_1+\cdots+a_kx_k=0$. When $A=\{1\}$, an $A$-weighted zero-sum sequence is also called a zero-sum sequence.
\end{defn}

\begin{defn} 
For a finite $R$-module $M$ and $A\subseteq R$, the {\it $A$-weighted Davenport
constant of $M$} denoted by $D_A(M)$ is defined
to be the least positive integer $k$ such that any sequence in $M$ of length $k$ has an $A$-weighted zero-sum subsequence. 
\end{defn}

From Theorem \ref{a}, we notice that for a finite abelian group $(G,+)$, we actually get a zero-sum subsequence which consists of {\it consecutive} terms of the given sequence. This motivates the following definition.

\begin{defn}
For a finite $R$-module $M$ and $A\subseteq R$, we define the constant $C_A(M)$ to be the least positive integer $k$ such that any sequence in $M$ of length $k$ has an $A$-weighted zero-sum subsequence of consecutive terms. 
\end{defn}

\begin{obs}
For a finite $R$-module $M$ and $A\subseteq R$, we claim that both the constants $D_A(M)$ and $C_A(M)$ exist. Let $M$ be a finite $R$-module. Given any sequence $S$ in $M$ of length $|M|$, by a similar argument as in the proof of Theorem \ref{a} we see that we can find a zero-sum subsequence $T$ of $S$ which has consecutive terms. By multiplying the zero-sum by an element $a\in A$, we see that $T$ is an $A$-weighted zero-sum subsequence of $S$. Hence, for any $A\subseteq R$ we have $C_A(M)\leq |M|$. Also, for any $A\subseteq R$ we clearly have $D_A(M)\leq C_A(M)$. 
\end{obs}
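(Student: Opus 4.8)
The plan is to establish the two inequalities $C_A(M)\leq |M|$ and $D_A(M)\leq C_A(M)$, both of whose right-hand sides are finite, so that each constant is well-defined. The first inequality is obtained by transplanting the proof of Theorem \ref{a} into the additive group underlying $M$. Given a sequence $S=(x_1,\ldots,x_k)$ in $M$ with $k=|M|$, I would form the partial sums $y_i:=x_1+\cdots+x_i$ for $i\in[1,k]$. If some $y_i=0$, then $(x_1,\ldots,x_i)$ is already a zero-sum subsequence of consecutive terms. Otherwise the $k$ elements $y_1,\ldots,y_k$ all lie in $M\setminus\{0\}$, a set of size $|M|-1<k$, so by the pigeonhole principle $y_i=y_j$ for some $i<j$; subtracting gives $x_{i+1}+\cdots+x_j=0$, a zero-sum subsequence of consecutive terms. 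Note that only the finiteness of $M$ and its abelian group structure are used at this step.

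Next I would upgrade ``zero-sum'' to ``$A$-weighted zero-sum''. Since $A$ is non-empty, fix any $a\in A$. If $T=(x_{i+1},\ldots,x_j)$ satisfies $x_{i+1}+\cdots+x_j=0$, then assigning the weight $a$ to every term gives $a x_{i+1}+\cdots+a x_j=a(x_{i+1}+\cdots+x_j)=a\cdot 0=0$ by distributivity of the module action; hence $T$ is an $A$-weighted zero-sum subsequence of consecutive terms. Combining this with the previous paragraph, every sequence in $M$ of length $|M|$ has an $A$-weighted zero-sum subsequence of consecutive terms, so $C_A(M)\leq |M|<\infty$.

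Finally, for $D_A(M)\leq C_A(M)$ I would observe that a subsequence of consecutive terms is in particular a subsequence; thus if every sequence of length $k:=C_A(M)$ has an $A$-weighted zero-sum subsequence of consecutive terms, then a fortiori it has an $A$-weighted zero-sum subsequence, so the defining property of $D_A(M)$ is satisfied at $k$, giving $D_A(M)\leq C_A(M)$. Both constants are therefore finite, hence exist. There is no genuine obstacle here; the only points requiring care are that the pigeonhole step relies solely on the additive structure of the finite module, and that scaling an honest zero-sum by a single common weight $a\in A$ preserves vanishing — which is precisely where the non-emptiness of $A$ is needed.
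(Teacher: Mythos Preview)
Your proposal is correct and follows precisely the approach sketched in the paper's own observation: form partial sums and apply the pigeonhole principle (as in Theorem~\ref{a}) to extract a zero-sum subsequence of consecutive terms, scale by a common $a\in A$ to obtain an $A$-weighted zero-sum, and note that consecutive subsequences are subsequences to get $D_A(M)\leq C_A(M)$. You have simply spelled out the details that the paper leaves implicit.
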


When $A=\{1\}$, we denote the constants  $C_A(M)$ and $D_A(M)$ by $C(M)$ and $D(M)$ respectively.   We consider the ring $\mathbb Z_n$ as a $\mathbb Z_n$-module and for $A\subseteq \mathbb Z_n$, we denote the constants $C_A(\mathbb Z_n)$ and $D_A(\mathbb Z_n)$ by $C_A(n)$ and $D_A(n)$  respectively. The next result is an immediate consequence of Theorem \ref{a}. Here we consider an abelian group $G$ as a $\mathbb Z$-module. 

\begin{cor}\label{cn}
For a cyclic group $G$ we have $C(G)=|G|$.
\end{cor}

\begin{proof}
For any group $G$ we have $D(G)\leq C(G)\leq |G|$. Let $G=\langle a\rangle$ be a cyclic group of order $n\geq 2$. By considering the constant sequence $(a,\ldots,a)$ of length $n-1$, we see that $D(G)\geq n$. Hence, for a cyclic group $G$ we have $C(G)=|G|$. 
\end{proof}

\begin{thm}\label{nz}
For $A=\mathbb Z_n\setminus \{0\}$ we have $C_A(n)=2$. 
\end{thm}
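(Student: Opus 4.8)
The claim is that for $A = \mathbb{Z}_n \setminus \{0\}$, the constant $C_A(n)$ equals $2$. To establish this I would separately prove $C_A(n) \leq 2$ and $C_A(n) \geq 2$, recalling that $n \geq 2$ throughout (otherwise $\mathbb{Z}_n$ is trivial and $A$ is empty).

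For the upper bound $C_A(n) \leq 2$, I need to show that any sequence $(x_1, x_2)$ of length $2$ in $\mathbb{Z}_n$ has an $A$-weighted zero-sum subsequence of consecutive terms. The key observation is to split into cases according to whether the entries are zero. If $x_1 = 0$, then the length-one subsequence $(x_1)$ works: pick any $a \in A$ (which is nonempty since $n \geq 2$), and $a \cdot x_1 = 0$. Similarly if $x_2 = 0$, the subsequence $(x_2)$ works. If both $x_1$ and $x_2$ are nonzero, then I would use the full length-two subsequence $(x_1, x_2)$: I need $a_1, a_2 \in A$ with $a_1 x_1 + a_2 x_2 = 0$. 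Choosing $a_2 = n - 1 \in A$ (valid since $n \geq 2$ makes $n - 1 \neq 0$), it suffices to find $a_1 \in \mathbb{Z}_n \setminus \{0\}$ with $a_1 x_1 = x_2$; but this is not always solvable, so instead the cleaner choice is $a_1 = x_2$ and $a_2 = n - x_1$ (i.e. $-x_1$), both nonzero by assumption, giving $a_1 x_1 + a_2 x_2 = x_2 x_1 - x_1 x_2 = 0$.

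For the lower bound $C_A(n) \geq 2$, I must exhibit a sequence of length $1$ that has no $A$-weighted zero-sum subsequence of consecutive terms. A length-one sequence has only the subsequences $()$ — the empty one, which I assume is excluded by convention as in the Davenport-constant setup — and itself. So I take the sequence $(1)$: for this to be an $A$-weighted zero-sum there would need to be $a \in A = \mathbb{Z}_n \setminus \{0\}$ with $a \cdot 1 = 0$, i.e. $a = 0$, contradicting $a \in A$. Hence no length-one sequence suffices, so $C_A(n) \geq 2$.

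The only potential subtlety is whether the empty sequence counts as a subsequence of consecutive terms (which would trivially make $C_A(n) = 1$); I would note that, consistent with the definition of the Davenport constant and the convention in the cited literature, subsequences are required to be nonempty, so this degenerate case does not arise. Combining the two bounds gives $C_A(n) = 2$, completing the proof. No step here is a genuine obstacle; the argument is elementary once the right witnesses ($a_1 = x_2$, $a_2 = -x_1$ for the upper bound, and the sequence $(1)$ for the lower bound) are chosen.
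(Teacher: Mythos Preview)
Your proof is correct and follows the same approach as the paper: the case split on whether some $x_i$ is zero, and in the nonzero case the choice $a_1=x_2$, $a_2=-x_1$, match the paper exactly. Your lower-bound argument via the sequence $(1)$ makes explicit what the paper only alludes to with ``we have seen that $C_A(n)\geq 2$''.
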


\begin{proof}
For any $A\subseteq \mathbb Z_n\setminus \{0\}$ we have seen that $C_A(n)\geq 2$. Let $S=(x_1,x_2)$ be a sequence in $\mathbb Z_n$. We claim that $S$ has an $A$-weighted zero-sum subsequence of consecutive terms. If either $x_1$ or $x_2$ is zero, we get a zero-sum subsequence of length 1. If both $x_1$ and $x_2$ are non-zero, then $a_1=x_2,~a_2=-x_1\in A$ and we have $a_1x_1+a_2x_2=0$. This shows that our claim is true and hence $C_A(n)\leq 2$. Thus $C_A(n)=2$.
\end{proof} 

Let $U(n)$ denote the multiplicative group of units in the ring $\mathbb Z_n$. If $p$ is a prime, by Theorem \ref{nz} it follows that $C_{U(p)}(p)=2$. For $j\geq 1$ let $U(n)^j$ denote the set $\{\,x^j : x\in U(n)\,\}$. For $n=p_1p_2\ldots p_k$ where $p_i$ is a prime for each $i\in [1,k]$, we define $\Omega(n)=k$. For a divisor $m$ of $n$, we define the homomorphism $f_{n,m}:\mathbb Z_n\to \mathbb Z_m$ as  $f_{n,m}(a+n\mathbb Z)=a+m\mathbb Z$. In this article the following are among some of the results which we have obtained.

\begin{itemize}
 \item For any odd natural number $n$, we have $C_{U(n)}(n)=2^{\Omega(n)}$.
 
 \item For any prime $p$, we have $C_{U(p)^2}(p)=3$ when $p\neq 2$ and $C_{U(2)^2}(2)=2$. 
 
 \item If every prime divisor of $n$ is at least 7, we have $C_{U(n)^2}(n)=3^{\Omega(n)}$. 
 
 \item If $p$ is a prime such that $p\equiv 1~(mod~3)$, we have $C_{U(p)^3}(p)=D_{U(p)^3}(p)=3$ when $p\neq 7$. Also we have $C_{U(7)^3}(7)=4$ and $D_{U(7)^3}(7)=3$. 
 
 \item For a squarefree number $n$, we have $C_{U(n)^3}(n)=2^{\Omega(n_2)}3^{\Omega(n_1)}$ if $n$ is not divisible by 2,7 or 13. (The notation ``$n=n_1n_2$'' is defined in Section \ref{cubes}.)
 
 \item For any number $n$, we have $2^{\Omega(n_2)}3^{\Omega(n_1)}\leq C_{U(n)^3}(n)\leq 2^{\Omega(n_2)}4^{\Omega(n_1)}$ if $n$ is not divisible by 2, 3 or 7. (The notation is as in the previous result.)
 
 \item Let $n=m_1m_2$ and $A,A_1,A_2$ be subsets of $\mathbb Z_n,\mathbb Z_{m_1},\mathbb Z_{m_2}$ respectively. If $f_{n,m_1}(A)\subseteq A_1$ and $f_{n,m_2}(A)\subseteq A_2$, then $C_A(n)\geq C_{A_1}(m_1)C_{A_2}(m_2)$. 
\end{itemize}

\begin{obs} Let $A\subseteq \mathbb Z_n\setminus \{0\}$ and let $m$ be given. Consider the sequence $$(1,\underbrace{0,\ldots,0}_{m-1},1,\underbrace{0,\ldots,0}_{m-1},1,0,\ldots)$$ in $\mathbb Z_n$ (of arbitrary length). This sequence does not have any $A$-weighted zero-sum subsequence of consecutive terms of length $m$. This shows that we cannot have a similar definition like that of $C_A(G)$ in which we place a restriction on the length of the $A$-weighted zero-sum subsequence.
\end{obs}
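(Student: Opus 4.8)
The plan is to prove the claim by a direct counting argument: I will show that every block of $m$ consecutive terms of the displayed sequence contains exactly one nonzero entry, namely a single $1$, so that any $A$-weighted sum over such a block reduces to a single element of $A$, which is nonzero by hypothesis. First I would record the sequence in closed form: writing it as $(x_1,x_2,\ldots)$, we have $x_i=1$ when $i\equiv 1\pmod m$ and $x_i=0$ otherwise. This faithfully encodes the given pattern, since the $1$'s sit at positions $1,m+1,2m+1,\ldots$ and are separated by runs of exactly $m-1$ zeros, independently of the (arbitrary) total length of the sequence.

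The key step is the analysis of an arbitrary window of consecutive terms of length $m$, say $(x_s,x_{s+1},\ldots,x_{s+m-1})$. The $m$ indices $s,s+1,\ldots,s+m-1$ constitute a complete set of residues modulo $m$, so exactly one of them is congruent to $1$ modulo $m$. Hence exactly one term of the window equals $1$ and the remaining $m-1$ terms are $0$. The only point requiring care is that the spacing of the $1$'s is \emph{exactly} $m$: this is what prevents a length-$m$ window from either skipping every $1$ (which would produce an all-zero block, and hence a genuine $A$-weighted zero-sum) or capturing two of them. Because runs of zeros have length precisely $m-1$, neither degeneracy occurs.

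Finally I would evaluate the weighted sum. For any weights $a_s,\ldots,a_{s+m-1}\in A$, the sum $\sum_{i=s}^{s+m-1}a_ix_i$ collapses to $a_j$, where $j$ is the unique index in the window with $x_j=1$. Since $A\subseteq\mathbb Z_n\setminus\{0\}$, we have $a_j\neq 0$, so this window is not an $A$-weighted zero-sum; as the window was arbitrary, the sequence admits no $A$-weighted zero-sum subsequence of consecutive terms of length $m$. I do not expect a serious obstacle here: the entire content lies in the observation that a window of length $m$ interacts with the period-$m$ placement of the $1$'s so as to force exactly one nonzero term per window, and the conclusion about the constant $C_A(n)$ follows immediately.
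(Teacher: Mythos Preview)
Your argument is correct: indexing the sequence so that $x_i=1$ precisely when $i\equiv 1\pmod m$ and observing that any $m$ consecutive indices form a complete residue system modulo $m$ is exactly the right way to see that each length-$m$ window contains a single $1$, whence any $A$-weighted sum over such a window equals some $a_j\in A\subseteq\mathbb Z_n\setminus\{0\}$.

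The paper states this observation without proof, treating it as self-evident; your write-up simply supplies the short justification the paper omits, and the approaches are identical in spirit.
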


\section{When $A=U(p)^2$ where $p$ is a prime}\label{q}

The next result follows from the well-known Cauchy-Davenport theorem (\cite{Nat}, Theorem 2.3). 

\begin{thm}\label{cd}
Let $p$ be a prime and $X,~Y,~W$ be subsets of $\mathbb Z_p$. Then either $X+Y+W=\mathbb Z_p$ or $|X+Y+W|\geq |X|+|Y|+|W|-2$.  
\end{thm}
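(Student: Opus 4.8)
The plan is to apply the Cauchy--Davenport theorem twice, removing one summand at a time. Recall that Cauchy--Davenport gives, for any nonempty $A,B\subseteq\mathbb Z_p$, the bound $|A+B|\ge\min\{\,p\,,\,|A|+|B|-1\,\}$. We may assume $X$, $Y$ and $W$ are each nonempty (this is the only case relevant in the sequel). First I would set $Z=X+Y$; by Cauchy--Davenport applied to $X$ and $Y$ we get $|Z|\ge\min\{p,|X|+|Y|-1\}$, and then applying it again to $Z$ and $W$ we get
\[
|X+Y+W|=|Z+W|\ \ge\ \min\{\,p\,,\,|Z|+|W|-1\,\}.
\]

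It then remains to trace through which term realizes each minimum. If $|Z|+|W|-1\ge p$, the displayed inequality forces $|X+Y+W|\ge p$, hence $X+Y+W=\mathbb Z_p$ and the first alternative holds. Otherwise $|X+Y+W|\ge|Z|+|W|-1$; within this case, if $|X|+|Y|-1\ge p$ then $Z=\mathbb Z_p$ and again $X+Y+W=\mathbb Z_p$, whereas if $|X|+|Y|-1<p$ then $|Z|\ge|X|+|Y|-1$, and substituting this into $|X+Y+W|\ge|Z|+|W|-1$ yields $|X+Y+W|\ge|X|+|Y|+|W|-2$, which is the second alternative.

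There is no real obstacle here: the argument is a routine iteration of Cauchy--Davenport, and more generally the same induction shows $|A_1+\cdots+A_k|\ge\min\{p,\ \sum_{i=1}^{k}|A_i|-(k-1)\}$ for nonempty subsets of $\mathbb Z_p$, of which the statement is the case $k=3$. The only point requiring a little care is the bookkeeping of the two places where a minimum can ``saturate'' at $p$; in each such branch one must conclude that the sumset is all of $\mathbb Z_p$ rather than settle for a cardinality bound.
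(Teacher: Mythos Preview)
Your proof is correct and is precisely the standard argument the paper alludes to: the paper does not spell out a proof but simply states that the result ``follows from the well-known Cauchy--Davenport theorem,'' and your two-step iteration of Cauchy--Davenport (first to $X+Y$, then to $(X+Y)+W$) is exactly how one makes that deduction.
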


For an odd prime $p$ we denote $U(p)^2$ by $Q_p$. As $Q_p$ is the image of the homomorphism $U(p)\to U(p)$ given by $x\mapsto x^2$ whose kernel is $\{1,-1\}$, it follows that $|Q_p|=(p-1)/2$. We denote $U(p)\setminus Q_p$ by $N_p$.

For an odd prime $p$, in \cite{AR} it was shown that when $A=Q_p$ (or $N_p$) we have  $D_A(p)=3$. By a similar argument, the next result is also true when $A=N_p$. 

\begin{thm}\label{q1}
For a prime $p$, we have $C_{Q_p}(p)=3$ when $p\neq 2$ and $C_{Q_2}(2)=2$. 
\end{thm}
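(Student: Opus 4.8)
The plan is to handle the two claims separately, the case $p=2$ being trivial and the case $p$ an odd prime being the substantive one. For $p=2$ we have $U(2)=\{1\}$, so $Q_2=\{1\}$ and a $Q_2$-weighted zero-sum subsequence is just an ordinary zero-sum subsequence; since $\mathbb Z_2$ is cyclic of order $2$, Corollary \ref{cn} gives $C_{Q_2}(2)=C(\mathbb Z_2)=2$. So assume $p$ is odd. For the lower bound $C_{Q_p}(p)\geq 3$ it suffices to exhibit a length-$2$ sequence with no $Q_p$-weighted zero-sum subsequence of consecutive terms. I would take $S=(x,y)$ with $x$ a nonzero square and $y$ a nonzero non-square; then no single term is zero, and $a_1x+a_2y=0$ with $a_1,a_2\in Q_p$ would force $y=-(a_1/a_2)x$, i.e. $y$ would lie in $(-1)\cdot Q_p\cdot Q_p = (-1)Q_p$. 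Whether this works depends on whether $-1$ is a square mod $p$: if $p\equiv 1\pmod 4$ then $-1\in Q_p$ so $(-1)Q_p=Q_p$ and $y\notin Q_p$ gives the contradiction; if $p\equiv 3\pmod 4$ then $(-1)Q_p=N_p$ and instead I should pick $y\in Q_p$, i.e. take $S=(x,y)$ with both entries nonzero squares (then $-(a_1/a_2)x$ ranges over $N_p$, which misses $y$). Either way a suitable length-$2$ sequence exists, so $C_{Q_p}(p)\geq 3$.

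For the upper bound I must show every length-$3$ sequence $S=(x_1,x_2,x_3)$ in $\mathbb Z_p$ has a $Q_p$-weighted zero-sum subsequence of consecutive terms. If any $x_i=0$ we are done (length-$1$ zero-sum). If any two consecutive terms $x_i,x_{i+1}$ are both nonzero, then since $-x_{i+1}/x_i\in U(p)$ we can write it as $a_1/a_2$ with $a_1,a_2\in Q_p$ provided $-x_{i+1}/x_i\in Q_p\cdot Q_p^{-1}=Q_p$; but $Q_p$ is a subgroup of index $2$, so this holds for exactly ``half'' the sequences and need not hold in general. The right move is instead: consider the three consecutive sums available, namely $x_1$, $x_1+x_2$-type weighted sums — more precisely, look at the sets $Q_px_1$, $Q_px_1+Q_px_2$, and $Q_px_1+Q_px_2+Q_px_3$ and ask when one of them contains $0$. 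Since each nonzero $x_i$ has $Q_px_i$ equal to a coset of $Q_p$ (so a set of size $(p-1)/2$), and $0\notin Q_px_i$, apply Theorem \ref{cd} to $X=Q_px_1$, $Y=Q_px_2$, $W=Q_px_3$: either the threefold sumset is all of $\mathbb Z_p$ (hence contains $0$, giving a weighted zero-sum of the whole length-$3$ block), or $|Q_px_1+Q_px_2+Q_px_3|\geq 3\cdot\frac{p-1}{2}-2$, which for $p\geq 5$ exceeds $p-1$ and forces the sumset to contain $0$ as well; the case $p=3$, where $Q_3=\{1\}$, is handled directly since $\mathbb Z_3$ is cyclic of order $3$ and $C(\mathbb Z_3)=3$ by Corollary \ref{cn}. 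Actually one must also cover the subcases where some (but not all) $x_i$ vanish: if exactly one or two are zero, a nonzero term or an adjacent nonzero pair handled by the two-term argument of Theorem \ref{nz}'s style reasoning (using that $-x_{i+1}/x_i$ might not be in $Q_p$) — so more carefully, if $x_i=0$ we win immediately, and otherwise all three are nonzero and the Cauchy–Davenport computation above applies to the full block.

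The main obstacle I anticipate is the two-term situation dressed up inside the three-term bound: when all of $x_1,x_2,x_3$ are nonzero, Cauchy–Davenport on the three cosets $Q_px_1,Q_px_2,Q_px_3$ cleanly forces $0$ into the threefold sumset for $p\geq 5$ (since $3(p-1)/2-2\geq p$ iff $p\geq 3$, with equality slack for $p\geq 5$ and the value landing above $p-1$), so the real content is just verifying the inequality and confirming that ``contains $0$'' does give consecutive-term weighted zero-sum — which it does, taking the $x_i$-block from $1$ to $3$. Thus the delicate points are (i) the correct choice of extremal length-$2$ sequence according to $p\bmod 4$, and (ii) the small primes $p=2,3$, both of which reduce to Corollary \ref{cn}. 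Assembling these pieces yields $C_{Q_p}(p)=3$ for odd $p$ and $C_{Q_2}(2)=2$.
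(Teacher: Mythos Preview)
Your overall strategy matches the paper's --- reduce $p=2,3$ to Corollary \ref{cn}, build a length-$2$ witness for the lower bound, and apply Cauchy--Davenport (Theorem \ref{cd}) to the three cosets $Q_px_1,Q_px_2,Q_px_3$ for the upper bound --- but there is a genuine arithmetic gap at $p=5$. You assert that $3(p-1)/2-2$ exceeds $p-1$ for $p\ge 5$ (and elsewhere that $3(p-1)/2-2\ge p$ iff $p\ge 3$); in fact $3(p-1)/2-2\ge p$ is equivalent to $p\ge 7$, and at $p=5$ the bound is exactly $4=p-1$. Concretely, for $S=(1,1,1)$ in $\mathbb Z_5$ one has $Q_5=\{1,-1\}$ and $Q_5+Q_5+Q_5=\{\pm 1,\pm 3\}=\{1,2,3,4\}$, which does \emph{not} contain $0$, so the full three-term block is not a $Q_5$-weighted zero-sum. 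Since for nonzero terms you only inspect the length-$3$ block, your argument does not establish $C_{Q_5}(5)\le 3$.

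The paper closes this gap by treating $p=5$ separately with a pigeonhole: given $(x,y,z)$, the list $x+y,\ x-y,\ -x+y,\ -x-y,\ z,\ -z$ has six entries in $\mathbb Z_5$, so two coincide, and any such coincidence yields a $\{\pm1\}$-weighted zero-sum among consecutive terms (of $(x,y)$, of $(x,y,z)$, or a single zero term). For $p\ge 7$ your Cauchy--Davenport computation is correct and coincides with the paper's. As a minor remark, your lower-bound construction splitting on $p\bmod 4$ is correct but heavier than needed: the paper simply takes $(-1,x)$ with $x\in N_p$, which works uniformly for all $p\ge 5$ since $a_1(-1)+a_2x=0$ with $a_i\in Q_p$ would force $x=a_1/a_2\in Q_p$.
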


\begin{proof}
When $p=2$ or $3$ we have $Q_p=\{1\}$ and so $C_{Q_p}(p)=C(p)$. Hence by Corollary \ref{cn}, we have $C_{Q_2}(2)=2$ and $C_{Q_3}(3)=3$. When $p=5$ we have $Q_5=\{1,-1\}$. Suppose $S=(x,y,z)$ is a sequence in $\mathbb Z_5$. Consider the set $\{x\pm y,-x\pm y,\pm z\}$ which has six elements of $\mathbb Z_5$. As at least two elements from this set are equal, we get a $Q_5$-weighted zero-sum subsequence of consecutive terms of $S$. Hence $C_{Q_5}(5)\leq 3$.

For a prime $p\geq 7$ let $S=(x,y,z)$ be a sequence in $\mathbb Z_p$. If some term of $S$ is zero, we get a $Q_p$-weighted zero-sum subsequence of length one. Suppose all the terms of $S$ are non-zero. For $w\in \mathbb Z_p$ let $Q_pw=\{aw:a\in Q_p\}$. It follows that $|Q_px|=|Q_py|=|Q_pz|=|Q_p|=(p-1)/2$. So $|Q_px|+|Q_py|+|Q_pz|-2=3(p-1)/2-2=(3p-7)/2$. When $p\geq 7$ we have $(3p-7)/2\geq p$. Thus, by Theorem \ref{cd} we get that $Q_px+Q_py+Q_pz=\mathbb Z_p$ and so $S$ is a $Q_p$-weighted zero-sum sequence. Hence, when $p\geq 7$ we have $C_{Q_p}(p)\leq 3$.

If $x\in N_p$ for a prime $p\geq 5$, we see that $(-1,x)$ is a sequence in $\mathbb Z_p$ which does not have any $Q_p$-weighted zero-sum subsequence. Hence, when $p\geq 5$ we have $C_{Q_p}(p)\geq 3$. Thus, from all the above results we get $C_{Q_p}(p)=3$ for a prime $p\geq 3$. 
\end{proof}

\section{When $A=U(p)^3$ where $p$ is a prime}

When $p\not\equiv 1~(mod~3)$, there is no element of order three in $U(p)$. So the kernel of $\varphi:U(p)\to U(p)$ given by $x\mapsto x^3$ is trivial and hence $U(p)^3=U(p)$. In this case, we have seen that $C_{U(p)}(p)=2$.

When $p\equiv 1~(mod~3)$, there is an element $c$ which has order three in $U(p)$. So the kernel of $\varphi$ is the cyclic subgroup generated by $c$. As the image of $\varphi$ is $U(p)^3$, it follows that $U(p)^3$ is a subgroup of index 3 in $U(p)$.

We will use the following results which are the first Theorem and Proposition 6.1 from \cite{LS}.

\begin{thm}\label{ls1}
 Let $F$ be a field with $|F|\neq 4,7,13,16$. Suppose $G$ is a subgroup of index $3$ in $F^*$. Then we have $G+G=F$. 
\end{thm}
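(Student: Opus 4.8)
We may assume $F$ is finite; write $q=|F|$, so that $3\mid q-1$ and $G$ is precisely the set of nonzero cubes of $F$. First note that, since $q\equiv 1\pmod 6$ whenever $q$ is odd (and trivially in characteristic $2$), we have $-1\in G$; hence $0\in G+G$, and in fact $0$ has $|G|$ representations as a sum of two elements of $G$. So it suffices to treat $t\in F^{*}$. The plan is to count, for each such $t$, the number $N(t)$ of pairs $(g_1,g_2)\in G\times G$ with $g_1+g_2=t$, to show $N(t)>0$ once $q$ exceeds an explicit bound, and then to dispose of the finitely many remaining fields by a direct computation --- which is exactly where $|F|\in\{4,7,13,16\}$ appear as genuine exceptions.

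For the main estimate I would fix a multiplicative character $\chi$ of $F$ of order $3$, so that $\mathbf 1_G(x)=\tfrac13\bigl(1+\chi(x)+\chi^{2}(x)\bigr)$ for $x\in F^{*}$ and $\mathbf 1_G(0)=0$, and expand:
\[
N(t)=\frac19\sum_{i,j\in\{0,1,2\}}\ \sum_{\substack{x+y=t\\ x,y\neq 0}}\chi^{i}(x)\,\chi^{j}(y).
\]
The term $i=j=0$ contributes the main term $\tfrac19(q-2)$, and, after normalizing $t$ to $1$, each of the eight remaining inner sums is essentially a Jacobi sum $J(\chi^{i},\chi^{j})$: of absolute value $\sqrt q$ when $\chi^{i},\chi^{j},\chi^{i+j}$ are all nontrivial, and bounded by a small constant in the degenerate cases. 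Carefully accounting for these gives $N(t)\ge \tfrac19\bigl(q-4\sqrt q-c\bigr)$ for an absolute constant $c$, which is positive as soon as $q$ is larger than a modest threshold. Alternatively one could count affine points on the smooth cubic curve $x^{3}+y^{3}=t$ and apply the Hasse bound, discarding the at most six solutions with $x=0$ or $y=0$.

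It then remains to check the fields with $3\mid q-1$ below the threshold. Here I would use that $G+G$ is invariant under multiplication by $G$, hence is a union of $\{0\}$ and some of the three cosets of $G$ in $F^{*}$; since $0\in G+G$ is already known and $2=1+1\in G+G$ covers one nonzero coset, one only needs to produce an element of $G+G$ in each of the other two cosets. A short case-by-case computation does this for every such $q$ except $4,7,13,16$, where $|G|$ equals $1,2,4,5$ respectively and $G+G$ genuinely omits elements (for instance $G+G=F\setminus G$ when $q=13$). The main obstacle is the precise bookkeeping of the degenerate Jacobi sums: a crude error bound would push the finite check up to an unpleasantly large range, so the constant $c$ above must be kept small enough that only a handful of small fields survive.
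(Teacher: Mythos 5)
The paper does not actually prove this statement: it is quoted as the first Theorem of Leep and Shapiro \cite{LS}, so there is no internal proof to compare against, and your character-sum argument is a legitimate self-contained route for the finite-field case. The bookkeeping is in fact kinder than you fear. Writing $\mathbf 1_G=\tfrac13(1+\chi+\chi^2)$ on $F^*$, the nine inner sums for $g_1+g_2=t$ with $t\neq 0$ consist of the main term $q-2$, four sums of modulus $1$ (exactly one character trivial), two genuine Jacobi sums of modulus $\sqrt q$ (the pairs $(\chi,\chi)$ and $(\chi^2,\chi^2)$, whose product character is nontrivial), and two degenerate sums $J(\chi,\chi^{-1})=-\chi(-1)$ of modulus $1$. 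Hence $N(t)\ge\tfrac19\bigl(q-2\sqrt q-8\bigr)>0$ for every $q>16$, and the prime powers $q\le 16$ with $3\mid q-1$ are precisely $4,7,13,16$ --- the excluded cases --- so no residual finite check is needed at all. (Your cruder $4\sqrt q$ bound would additionally force you to verify $q=19$ and $q=25$ by hand; both do work.) Your observations that $-1\in G$ because $(q-1)/3$ is even for odd $q$, and that $G+G$ is a union of $\{0\}$ and cosets of $G$, are correct and exactly the right reductions.

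The one genuine gap is the opening line ``we may assume $F$ is finite.'' The theorem as stated (and as proved in \cite{LS}) allows infinite fields, where a counting argument says nothing; an index-$3$ subgroup of $F^*$ for infinite $F$ requires a separate algebraic argument. Since this paper only ever applies the result to $F=\mathbb Z_p$, the finite case suffices for the application, but as a proof of the theorem as stated the reduction must be justified or the hypothesis restricted. A second, minor point: the remark that $2=1+1$ witnesses a nonzero coset of $G$ inside $G+G$ fails in characteristic $2$; this is harmless here because the only fields of characteristic $2$ that could require a hand check are $q=4,16$, both excluded.
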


\begin{thm}\label{ls2}
 Let $F$ be a finite field with $|F|\neq 4,7$. Suppose $G$ is a subgroup of index 3 in $F^*$.  If $a\in G+G$ with $a\notin G\cup\{0\}$, then we have $G+aG=F^*$. 
\end{thm}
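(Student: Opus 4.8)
The plan is to reduce the assertion to counting solutions of a linear equation over $F$ and to estimate that count with character sums. Write $F=\mathbb F_q$; since $F^*$ has a subgroup of index $3$ we have $3\mid q-1$, so $F$ carries a multiplicative character $\chi$ of order exactly $3$, and on $F^*$ one has $\mathbf 1_G=\tfrac13(\chi_0+\chi+\bar\chi)$, where $\chi_0$ is the principal character and every character is extended by $0$ at $0$ (here $G=\ker\chi$, the unique subgroup of index $3$). Fix $t\in F^*$; it suffices to show that
\[
  N(t):=\#\{(x,y)\in G\times G : x+ay=t\}\ >\ 0 .
\]
Expanding via the formula for $\mathbf 1_G$ gives $N(t)=\tfrac19\sum_{\psi_1,\psi_2}\ \sum_{x+ay=t}\psi_1(x)\psi_2(y)$, the outer sum over $\psi_1,\psi_2\in\{\chi_0,\chi,\bar\chi\}$.

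The term $(\psi_1,\psi_2)=(\chi_0,\chi_0)$ is the number of solutions of $x+ay=t$ with $x,y\in F^*$, which equals $q-2$ because $t\neq 0$ and $a\neq 0$. For each of the other $8$ terms, substituting $x=t-ay$ turns it into $\sum_{y\in F}\psi_1(t-ay)\psi_2(y)$, a one-variable mixed character sum whose two nodes $0$ and $t/a$ are distinct. If $\psi_1$ or $\psi_2$ is principal, or if $\psi_1\psi_2$ is principal, the sum can be evaluated elementarily and has modulus $1$; this accounts for $6$ of the $8$ terms. The remaining two are of the form $\sum_{y\in F}\chi'\bigl((t-ay)y\bigr)$ with $\chi'$ a non-principal cubic character, to which Weil's bound applies (the argument is a quadratic with two distinct simple roots) and yields modulus $\le\sqrt q$. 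Hence
\[
  9\,N(t)\ \ge\ (q-2)-6-2\sqrt q\ =\ (\sqrt q-1)^2-9 ,
\]
which is positive as soon as $q>16$. Thus $G+aG=F^*$ for every $a\notin G\cup\{0\}$ whenever $q>16$; note that in this range the hypothesis $a\in G+G$ is not even needed.

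There remain the prime powers $q\equiv 1\pmod 3$ with $q\le 16$, namely $q\in\{4,7,13,16\}$ — exactly the fields excluded in Theorem~\ref{ls1}. For $q=4$ one has $|G|=1$ and $G+G=\{0\}$, so there is no admissible $a$ and the statement is vacuous. For $q=7$ the element $a=2$ lies in $G+G$ and outside $G$ yet satisfies $G+aG\subsetneq F^*$, which is exactly why $q=7$ must be excluded. For $q=13$ and $q=16$ one verifies by direct inspection that $G+aG=F^*$ for every admissible $a$: when $q=13$ the set $G+G$ already contains both non-trivial cosets of $G$, so the hypothesis is automatic, whereas when $q=16$ the condition $a\in G+G$ is precisely what discards the weights for which the conclusion would fail.

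The step I expect to be the main obstacle is the character-sum estimate together with its degenerate sub-cases: one must treat the principal characters and the principal product $\psi_1\psi_2$ carefully enough to bring the threshold down to $q\le 16$, so that only four fields remain to be checked by hand, $q=16$ being the one where the hypothesis $a\in G+G$ genuinely intervenes. One might hope to bypass Weil's bound via Theorem~\ref{ls1}: for $q\notin\{4,7,13,16\}$ we have $G+G=F$, and since $G+aG$ is invariant under multiplication by $G$ it is a union of cosets of $G$ avoiding $0$ and one checks that it contains both $G$ and $aG$; but showing it also meets the third coset of $G$ appears to require either a further combinatorial idea or essentially the estimate above, so the character-sum route seems to be the honest one.
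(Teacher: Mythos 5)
The paper does not actually prove this statement: it is imported verbatim as Proposition~6.1 of Leep and Shapiro \cite{LS}, whose argument is elementary and field-theoretic. Your character-sum proof is therefore a genuinely different route, and it is essentially correct. The expansion of $\mathbf 1_G$ over the three cubic characters, the main term $q-2$, the six degenerate terms of modulus $1$ (four involving the principal character, plus the two Jacobi-type sums with $\psi_1\psi_2$ principal), and the two Weil terms bounded by $\sqrt q$ (the polynomial $y(t-ay)$ has two distinct simple roots, hence is not a constant times a cube) all check out, and the inequality $9N(t)\ge(\sqrt q-1)^2-9$ correctly isolates $q\in\{4,7,13,16\}$, of which $4$ and $7$ are excluded by hypothesis while $13$ and $16$ are finite checks that do succeed. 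What this buys is independence from Theorem~\ref{ls1} and the stronger conclusion that for $q>16$ the hypothesis $a\in G+G$ is superfluous; what it costs is the appeal to Weil's bound where Leep--Shapiro stay elementary.

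Two points need repair, though neither sinks the argument. First, your parenthetical about $q=16$ is wrong: in $\mathbb F_{16}$ (and likewise in $\mathbb F_{13}$) one computes $G+G=\{0\}\cup(F^*\setminus G)$, so the hypothesis $a\in G+G$ is automatic for every $a\notin G\cup\{0\}$ and discards nothing; the conclusion simply holds for all such $a$. The inspection you invoke does succeed --- e.g.\ in $\mathbb F_{16}=\mathbb F_2[\alpha]/(\alpha^4+\alpha+1)$ one checks $G+\alpha G=F^*$ directly, and the other nontrivial coset follows from $G+a^{-1}G=a^{-1}(G+aG)$ --- but you should actually record these computations rather than assert them, since they are the only thing standing between the analytic estimate and the theorem. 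Second, your count only establishes $F^*\subseteq G+aG$; for the stated equality you should add that $0\notin G+aG$, which holds because $F^*$ is cyclic, so $G$ is the group of cubes, $-1=(-1)^3\in G$, and $x+ay=0$ with $x,y\in G$ would force $a=-x/y\in G$, contrary to hypothesis.
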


\begin{lem}\label{cub5}
Let $p$ be a prime such that $p\equiv 1~(mod~3)$ and $p\neq 7,13$. Suppose $S$ is a sequence in $\mathbb Z_p$ such that at least three terms of $S$ are in $U(p)$. Then $S$ is a $U(p)^3$-weighted zero-sum sequence.  
\end{lem}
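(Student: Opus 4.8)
The plan is to reduce the claim to the two cited results of Leep and Shapiro, Theorem \ref{ls1} and Theorem \ref{ls2}, applied to the field $F=\mathbb Z_p$ with $G=U(p)^3$, which is a subgroup of index $3$ in $F^*$ since $p\equiv 1\pmod 3$. Because $|F|=p\neq 7,13$ and $p$ is an odd prime so $p\neq 4,16$, both hypotheses on $|F|$ are met. Let $S=(x_1,\ldots,x_k)$ be the given sequence, and let $i_1<i_2<i_3$ be three indices with $x_{i_1},x_{i_2},x_{i_3}\in U(p)$. The strategy is to produce an $A$-weighted zero-sum using the consecutive block $x_{i_1},x_{i_1+1},\ldots,x_{i_3}$: for the three chosen units we will choose weights in $U(p)^3$ cleverly, and for every intermediate term we have free choice of a weight in $U(p)^3$, so it suffices to show that as the weight on each intermediate unit ranges over $U(p)^3$ (and each intermediate zero term contributes $0$ regardless), the reachable partial sums together with the contribution of the three special units can be made to hit $0$.

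First I would handle the core case where the sequence is exactly $(x_1,x_2,x_3)$ with all three terms units; the general statement then follows because any extra unit term $x_j$ among the intermediate positions only enlarges the set of achievable sums (its weighted value $a_j x_j$ ranges over a nonempty set, and in the worst case we can absorb it), and any intermediate zero term is harmless. For the three-unit case, write the target condition as $a_1 x_1 + a_2 x_2 + a_3 x_3 = 0$ with $a_i\in G$. Dividing through by $a_3 x_3$ (a unit) and relabelling, this is equivalent to asking whether $0\in G x_1' + G x_2' + G$ for suitable fixed units, i.e. whether $-1 \in G u + G v$ where $u=x_1/x_3$, $v=x_2/x_3$ (up to a unit scaling absorbed into $G$, since $G$ is a group). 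I would first dispose of the easy subcase where one of $u,v,$ or their relevant ratios lies in $G$: then $Gu=G$ or $Gv=G$, and $G+G=F$ by Theorem \ref{ls1} already contains $-1$, so we are done.

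The remaining subcase is when $u,v\notin G$ in the relevant sense. Here I would invoke Theorem \ref{ls2}: since $G+G=F\ni -v^{-1}u$ wait — more carefully, I want to write $-1$ as an element of $Gu+Gv$. Factor out $v$: $Gu+Gv = v(G(u/v)+G)$, so I need $-v^{-1}\in G(u/v)+G$, equivalently (scaling by $G$) I need some coset representative. The cleanest route: by Theorem \ref{ls1}, $G+G=F$, so in particular every element of $F^*$, and also $0$, is a sum of two elements of $G$; pick $a\in G+G$ with $a=-u/v$ if this lies outside $G\cup\{0\}$, and apply Theorem \ref{ls2} to get $G+aG=F^*$, hence $vG + (-u)G = v(G+aG) = vF^* = F^* \ni \text{(anything nonzero)}$; combined with the zero term from the third unit handled separately, this yields the weighted zero-sum. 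The bookkeeping of exactly which ratios are scaled where, and ensuring the element $a$ fed into Theorem \ref{ls2} genuinely avoids $G\cup\{0\}$, is the main obstacle: one must check that if $-u/v\in G$ or $-u/v=0$ then we are actually in the already-resolved easy subcase (the value $0$ is impossible as $u,v$ are units, and $-u/v\in G$ forces $Gu=Gv$, reducing to a two-term sum $Gu+Gu+G=G+G+G$ type computation which again gives $F$). I would organize the proof as: (1) set up $F=\mathbb Z_p$, $G=U(p)^3$, verify index and size exclusions; (2) reduce to three unit terms via the consecutive-block absorption argument; (3) normalize to the question ``is $-1\in Gx_1'+Gx_2'+G$''; (4) split into the case some ratio lies in $G$, handled by Theorem \ref{ls1}; and (5) the generic case, handled by Theorem \ref{ls2}.
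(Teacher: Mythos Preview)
Your three-unit analysis via Theorems \ref{ls1} and \ref{ls2} is on the right track and matches the paper's core idea. However, the reduction to that case has two genuine gaps.

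First, the lemma asserts that the \emph{entire} sequence $S$ is a $U(p)^3$-weighted zero-sum: every term of $S$ must receive a weight in $U(p)^3$ and the total must vanish. Restricting attention to the ``consecutive block $x_{i_1},\ldots,x_{i_3}$'' ignores terms of $S$ lying outside $[i_1,i_3]$, which must also be weighted; there is nothing about subsequences or consecutive terms in the statement. Second, and more substantively, your passage from three units to the general case does not work as written. Writing $G=U(p)^3$, knowing $0\in Gx_{i_1}+Gx_{i_2}+Gx_{i_3}$ does \emph{not} imply $0\in Gx_{i_1}+Gx_{i_2}+Gx_{i_3}+Gx_j$ for an additional nonzero term $x_j$, since $0\notin Gx_j$; ``enlarging the set of achievable sums'' is not the same as keeping $0$ in it. What is actually needed is the stronger fact that $Gx+Gy\supseteq\mathbb Z_p\setminus\{0\}$ for any two units $x,y$---which your own case split essentially establishes but never isolates, as you only extract the single value $-1\in Gu+Gv$. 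The paper's fix is clean: let $w$ be the plain sum of all terms of $S$ other than three chosen units $x,y,z$, and pick $t\in U(p)$ with $zt^3\neq w$ (possible since $zX^3=w$ has at most three roots while $|U(p)|>3$). With $z'=w-zt^3\neq 0$, it suffices to show $(x,y,z')$ is a $G$-weighted zero-sum; after normalising so that $x\in G$, this amounts to $-z'\in G+Gy$, and Theorems \ref{ls1} and \ref{ls2} give exactly this because $-z'\neq 0$.
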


\begin{proof}
Let $S$ be a sequence in $\mathbb Z_p$ and $x,y,z$ be terms of $S$ which are units. Let $w$ be the sum of the remaining terms of $S$ (if any). The equation $zX^3=w$ has at most three roots in $\mathbb Z_p$. As there are at least four elements in $U(p)$ when $p>5$, we can find $t\in U(p)$ such that $zt^3\neq w$. So if $z'=w-zt^3$, then we have $z'\neq 0$. To prove that $S$ is a $U(p)^3$-weighted zero-sum sequence, it is enough to show that the sequence $S'=(x,y,z')$ is a $U(p)^3$-weighted zero-sum sequence as we have $-t^3\in U(p)^3$.

For any $c\in U(p)$, the sequence $(cx,cy,cz')$ is a $U(p)^3$-weighted zero-sum sequence if and only if the sequence $S'$ is a $U(p)^3$-weighted zero-sum sequence. So we can assume that $x\in U(p)^3$. From Theorems \ref{ls1} and \ref{ls2} (depending on whether $y\in U(p)^3$ or $y\notin U(p)^3$), we see that $-z'\in U(p)^3+U(p)^3y$. As $x\in U(p)^3$, we see that $U(p)^3=U(p)^3x$. Thus, there exist $a\in U(p)^3$ and $b\in U(p)^3$ such that $-z'=ax+by$. Hence, $S'$ is a $U(p)^3$-weighted zero-sum sequence. 
\end{proof}

\begin{rem}
The conclusion of Lemma \ref{cub5} is not true when $p=7$ and $13$. This is because we can check that the sequence $(1,1,1)$ in $\mathbb Z_p$ is not a $U(p)^3$-weighted zero-sum sequence when $p=7$ and $13$ as $U(7)^3=\{\pm 1\}$ and $U(13)^3=\{\pm 1,\pm 5\}$. 
\end{rem}

\begin{thm}\label{cubp13}
If $p$ is a prime such that $p\equiv 1~(mod~3)$, we have $D_{U(p)^3}(p)\geq 3$. If in addition $p\neq 7$, we have $D_{U(p)^3}(p)=C_{U(p)^3}(p)=3$. 
\end{thm}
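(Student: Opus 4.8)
The plan is to prove the two claims separately, starting with the lower bound, which holds for all primes $p \equiv 1 \pmod 3$, and then the upper bound under the extra hypothesis $p \neq 7$.

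\textbf{Lower bound.} To show $D_{U(p)^3}(p) \geq 3$, I would exhibit a sequence of length $2$ in $\mathbb Z_p$ with no $U(p)^3$-weighted zero-sum subsequence. Since $p \equiv 1 \pmod 3$, the subgroup $U(p)^3$ has index $3$, so there exists a unit $x$ with $x \notin U(p)^3$. Consider the sequence $S = (1, x)$. A length-one subsequence cannot be a $U(p)^3$-weighted zero-sum since $a \cdot 1 \neq 0$ and $a \cdot x \neq 0$ for any $a \in U(p)^3$ (all these elements are units). A length-two subsequence being a weighted zero-sum would require $a_1 \cdot 1 + a_2 \cdot x = 0$ for some $a_1, a_2 \in U(p)^3$, i.e. $x = -a_1 a_2^{-1}$. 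But I would need $-1 \in U(p)^3$ for this to force $x \in U(p)^3$ — that holds precisely when $p \equiv 1 \pmod 3$ is such that... actually $-1 \in U(p)^3$ iff the order of $-1$, namely $2$, divides $(p-1)/3$, i.e. iff $p \equiv 1 \pmod 6$ with $(p-1)/3$ even, i.e. $p \equiv 1 \pmod{12}$ or so; this is delicate. To avoid case analysis, I would instead pick $x$ more carefully: choose $x$ to be a generator of $U(p)$ (a primitive root), so that $x \notin U(p)^3$ and moreover $-x = x^{1 + (p-1)/2}$; checking whether $-a_1 a_2^{-1} \in U(p)^3$ reduces to a congruence in the exponent. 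A cleaner route: the map $U(p) \to U(p)/U(p)^3 \cong \mathbb Z_3$ sends $1 \mapsto 0$, and I just need a value not attainable. Since $|U(p)/U(p)^3| = 3 \geq 2$, pick any $x$ outside the coset structure forced by $\{a_1 a_2^{-1} : a_i \in U(p)^3\} = U(p)^3$; then $(1,x)$ with $x$ in a coset other than that of $-1$ works. This is a finite check once $-1$'s coset is identified, so it is routine.

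\textbf{Upper bound.} To show $C_{U(p)^3}(p) \leq 3$ when $p \equiv 1 \pmod 3$ and $p \neq 7$, I take any sequence $S = (x_1, x_2, x_3)$ in $\mathbb Z_p$ and must produce a $U(p)^3$-weighted zero-sum subsequence of consecutive terms. If any $x_i = 0$, the length-one subsequence works. If some two consecutive terms $x_i, x_{i+1}$ are both units and their ratio lies in the right configuration, a length-two subsequence suffices. The main case is when enough terms are units: if all three of $x_1, x_2, x_3$ are units, then Lemma \ref{cub5} applies directly when $p \neq 7, 13$, giving that $S$ itself is a $U(p)^3$-weighted zero-sum sequence. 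The remaining gaps are $p = 13$ and the cases where fewer than three terms are units. For $p = 13$: here $U(13)^3 = \{\pm 1, \pm 5\}$, and I would handle a length-$3$ sequence of units by a direct computation, showing $U(13)^3 x_1 + U(13)^3 x_2 + U(13)^3 x_3$ covers $\mathbb Z_{13}$ or at least contains $0$ in the relevant subcases (this is the sumset $\{\pm 1, \pm 5\}$ scaled three ways — a finite verification). For the cases with at most two units among $x_1, x_2, x_3$: if exactly two, say $x_i, x_j$ are units and one is $0$ we're done, so the non-unit is nonzero but not a unit — impossible in $\mathbb Z_p$ with $p$ prime, since every nonzero element is a unit. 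Hence the only genuine cases are "all three units" (covered by Lemma \ref{cub5} or direct check for $p=13$) and "at least one term is $0$" (trivial). Therefore $C_{U(p)^3}(p) \leq 3$, and combined with $D_{U(p)^3}(p) \geq 3$ and $D_{U(p)^3}(p) \leq C_{U(p)^3}(p)$, we get $D_{U(p)^3}(p) = C_{U(p)^3}(p) = 3$.

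\textbf{Main obstacle.} The delicate point is the $p = 13$ exceptional case, since Lemma \ref{cub5} explicitly excludes it (and the remark shows $(1,1,1)$ fails there). I expect to resolve it by observing that although $(1,1,1)$ is not a weighted zero-sum, for $C_{U(p)^3}(p)$ I only need that \emph{some} consecutive subsequence of any length-$3$ sequence works; when the full sequence $(x_1,x_2,x_3)$ of units is not itself a weighted zero-sum, I would check whether one of the length-$2$ consecutive subsequences $(x_1,x_2)$ or $(x_2,x_3)$ is — i.e. whether $-x_1/x_2 \in U(13)^3 \cdot (U(13)^3)^{-1} = U(13)^3$ or similarly for the other pair. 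If both length-$2$ checks fail, then $-x_1/x_2, -x_2/x_3 \notin U(13)^3$, which pins down the cosets of $x_1, x_2, x_3$ relative to each other modulo $U(13)^3$, leaving only finitely many configurations to rule in or out by hand. The secondary subtlety is getting the lower-bound sequence right across all residues of $p \pmod{3}$ without splitting into subcases; I would finesse this by exhibiting the explicit two-term sequence $(1, g)$ for a primitive root $g$ and noting $g, g^{-1}, -g, -g^{-1}$ all lie outside $U(p)^3$ precisely because their exponents are $\pm 1, (p-1)/2 \pm 1$, none divisible by $3$ — a clean exponent computation.
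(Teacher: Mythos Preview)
Your proposal is correct and follows the same skeleton as the paper: exhibit a length-two sequence for the lower bound, invoke Lemma \ref{cub5} for $p\neq 7,13$, and handle $p=13$ separately. Two points of comparison are worth noting.

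For the lower bound, your detour through primitive roots is unnecessary. The paper simply takes the sequence $(-1,x)$ with $x\notin U(p)^3$: then $a_1(-1)+a_2x=0$ forces $x=a_1/a_2\in U(p)^3$, a contradiction, with no need to know the coset of $-1$. In fact your worry was unfounded anyway: since $p\equiv 1\pmod 3$ and $p$ is odd, $p-1=6k$, so $(p-1)/2=3k$ and $-1=(g^k)^3\in U(p)^3$ always. Thus your original sequence $(1,x)$ would have worked directly.

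For $p=13$, the paper avoids the brute-force coset enumeration you sketch. After scaling so that $x\in U(13)^3$, it splits on $y$: if $y\in U(13)^3$ then $(x,y)$ is already a $U(13)^3$-weighted zero-sum via $yx+(-x)y=0$ (using $-1\in U(13)^3$); if $y\notin U(13)^3$, one checks in one line that $U(13)\setminus U(13)^3\subseteq U(13)^3+U(13)^3$, and then Theorem \ref{ls2} gives $U(13)^3x+U(13)^3y=U(13)\ni -z$, so the full triple is a weighted zero-sum. Your finite case check would succeed, but the paper's route is cleaner and explains structurally why $p=13$ behaves differently from $p=7$.
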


\begin{proof}
Let $x\in U(p)\setminus U(p)^3$ where $p$ is a prime such that $p\equiv 1~(mod~3)$. As the sequence $(-1,x)$ does not have any $U(p)^3$-weighted zero-sum subsequence, it follows that  $D_{U(p)^3}(p)\geq 3$. 

Let $S=(x,y,z)$ be a sequence in $\mathbb Z_p$ where $p\neq 7,13$. We want to show that $S$ has a $U(p)^3$-weighted zero-sum subsequence of consecutive terms. We can assume that $x,y,z\in U(p)$. By Lemma \ref{cub5} we see that $S$ is a $U(p)^3$-weighted zero-sum sequence. Hence $C_{U(p)^3}(p)\leq 3$. As $D_{U(p)^3}(p)\leq C_{U(p)^3}(p)$, we get that $D_{U(p)^3}(p)=C_{U(p)^3}(p)= 3$.

As $C_{U(13)^3}(13)\geq D_{U(13)^3}(13)\geq 3$, it now remains to show that $C_{U(13)^3}(13)\leq 3$.  
Let $S=(x,y,z)$ be a sequence in $\mathbb Z_{13}$. We may assume that $x,y,z\in U(13)$. By multiplying the terms of $S$ by an element of $U(13)$, we may also assume that $x\in U(13)^3$. Suppose we have that $y\in U(13)^3$. Then $(x,y)$ is a $U(13)^3$-weighted zero-sum subsequence of $S$ as $yx-xy=0$ and $y,-x\in U(13)^3$.

Suppose $y\in B=U(13)\setminus U(13)^3$. As $U(13)^3=\{\pm 1,\pm 5\}$, we get that $B=\{\pm 2,\pm 3,\pm 4,\pm 6\}$. We can check that $B\subseteq U(13)^3+U(13)^3$. So by Theorem \ref{ls2} we have $U(13)^3x+U(13)^3y=U(13)$ as $x\in U(13)^3$. Thus, there exist $a,b\in U(13)^3$ such that $-z=ax+by$. So $S$ is a $U(13)^3$-weighted zero-sum sequence. Hence, we get that $C_{U(13)^3}(13)\leq 3$. 
\end{proof}

\begin{lem}\label{c7}
We have $D_{U(7)^3}(7)=3$ and $C_{U(7)^3}(7)=4$.
\end{lem}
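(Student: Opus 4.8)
The plan is to reduce everything to elementary signed-sum arithmetic in $\mathbb{Z}_7$, using the fact that since $7\equiv 1\pmod 3$ and $|U(7)^3|=(7-1)/3=2$, we have $U(7)^3=\{1,6\}=\{\pm 1\}$. With this weight-set a $U(7)^3$-weighted zero-sum subsequence of $(x_1,\dots,x_k)$ is simply a subsequence (consecutive, in the case of $C$) admitting signs $\varepsilon_i\in\{\pm 1\}$ with $\sum\varepsilon_i x_i=0$. It will help to record each unit $u\in\mathbb{Z}_7$ by its representative $|u|\in\{1,2,3\}$ (so $|u|=|-u|$): then two units $u,v$ form a weighted zero-sum pair iff $|u|=|v|$, and three units $u,v,w$ form a weighted zero-sum triple iff $\pm|u|\pm|v|\pm|w|=0$ in $\mathbb{Z}_7$ for some choice of signs. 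I will also use the three ``doubling'' congruences $2\cdot 1\equiv 2$, $2\cdot 2\equiv-3$, $2\cdot 3\equiv-1\pmod 7$.

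For the equality $D_{U(7)^3}(7)=3$, the bound $D_{U(7)^3}(7)\ge 3$ is already supplied by Theorem~\ref{cubp13}. For the reverse, given any $(x_1,x_2,x_3)$ in $\mathbb{Z}_7$ I would note that a zero term yields a weighted zero-sum of length one, two units with the same absolute value yield one of length two (subsequences need not be consecutive for $D$), and otherwise $\{|x_1|,|x_2|,|x_3|\}=\{1,2,3\}$, so $1+2-3=0$ gives, after adjusting the signs on the $x_i$, one of length three. For $C_{U(7)^3}(7)\ge 4$ I would exhibit the length-three sequence $(1,3,1)$ and check that none of its consecutive subsequences works: the singletons are nonzero, the pairs $(1,3)$ and $(3,1)$ fail because $|1|\neq|3|$, and the full triple fails because $\pm 1\pm 3\pm 1\in\{\pm 1,\pm 3,\pm 5\}$, which misses $0$ modulo $7$. (Any pattern $(a,b,a)$ with $a\neq b$ and $b\not\equiv\pm 2a$ would serve equally well.)

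The substantive direction is $C_{U(7)^3}(7)\le 4$. Suppose $S=(x_1,x_2,x_3,x_4)$ in $\mathbb{Z}_7$ has no $U(7)^3$-weighted zero-sum subsequence of consecutive terms; I want a contradiction. Then every $x_i$ is a unit, no two consecutive terms share an absolute value, and — the key observation — no three consecutive terms have three distinct absolute values, since such a triple would have absolute values $\{1,2,3\}$ and hence be a weighted zero-sum. These last two facts force $|x_1|=|x_3|$ and $|x_2|=|x_4|$; write $a=|x_1|=|x_3|$ and $b=|x_2|=|x_4|$, so $a\neq b$ and $a,b\in\{1,2,3\}$. Using the doubling congruences, the consecutive triple with absolute-value pattern $(a,b,a)$ is a weighted zero-sum exactly when $b\equiv\pm 2a\pmod 7$, i.e.\ for $(a,b)\in\{(1,2),(2,3),(3,1)\}$, and the consecutive triple with pattern $(b,a,b)$ exactly for $(a,b)\in\{(2,1),(3,2),(1,3)\}$; since these six ordered pairs are precisely all pairs $(a,b)$ with $a\neq b$, one of the two triples is already a weighted zero-sum — a contradiction. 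Hence $C_{U(7)^3}(7)\le 4$, and combining with the previous paragraph, $C_{U(7)^3}(7)=4$.

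The only spot where a genuine idea rather than a finite check enters is the observation that ruling out equal-absolute-value consecutive pairs and three-distinct-absolute-value consecutive triples already pins a length-four counterexample down to the alternating shape $a,b,a,b$; after that, the three doubling congruences modulo $7$ dispatch every remaining case, so the main obstacle is really just organising this bookkeeping cleanly. Note also that everything here is genuinely special to $p=7$: the field $\mathbb{F}_7$ is one of the cases excluded from Theorems~\ref{ls1} and \ref{ls2}, which is exactly why the clean subgroup-sumset statements cannot be invoked and a hands-on computation is required.
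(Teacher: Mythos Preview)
Your argument is correct. The computation of $D_{U(7)^3}(7)$ and the lower bound $C_{U(7)^3}(7)\ge 4$ via the sequence $(1,3,1)$ match the paper exactly. For the upper bound $C_{U(7)^3}(7)\le 4$, however, the paper takes a different and somewhat slicker route: given $(x,y,z,w)$, it forms the eight elements $\pm x\pm y$ and $\pm z\pm w$ of $\mathbb{Z}_7$ and applies the pigeonhole principle. A coincidence within the first four forces $x=0$, $y=0$, or $x=\pm y$; within the last four it forces $z=0$, $w=0$, or $z=\pm w$; and a cross-coincidence gives a $\{\pm 1\}$-weighted zero-sum for the full sequence. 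Your approach instead analyses the structure of a putative counterexample, showing that the constraints on consecutive pairs and triples force the absolute-value pattern $a,b,a,b$, and then disposes of the six possible $(a,b)$ by the doubling congruences. The pigeonhole argument is shorter and avoids any case distinction, while your structural argument has the merit of explaining \emph{why} length three fails: the obstruction is precisely the pattern $(a,b,a)$ with $b\not\equiv\pm 2a$, and it is exactly this pattern that cannot survive to length four.
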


\begin{proof}
We observe that $U(7)^3=\{\pm 1\}$. 
Let $S=(x,y,z)$ be a sequence in $\mathbb Z_7$ of length 3. We want to show that $S$ has a $\{\pm 1\}$-weighted zero-sum subsequence. We can assume that $x,y,z$ are non-zero and so are in $U(7)=\{\,\pm 1, \pm 2, \pm 3\,\}$. If any two terms of $S$ are equal upto sign, we get a $\{\pm 1\}$-weighted zero-sum subsequence of $S$. Otherwise upto sign and upto a permutation of the terms, the sequence will be $(1,2,3)$ which is a $\{\pm 1\}$-weighted zero-sum sequence. It follows that $D_{\{\pm 1\}}(7)\leq 3$ and so from Theorem \ref{cubp13} we have that $D_{\{\pm 1\}}(7)=3$.

As the sequence $(1,3,1)$ in $\mathbb Z_7$ does not have any $\{\pm 1\}$-weighted zero-sum subsequence of consecutive terms, it follows that $C_{\{\pm 1\}}(7)\geq 4$. Let $S=(x,y,z,w)$ be a sequence in $\mathbb Z_7$. Consider the sequence in $\mathbb Z_7$ having length eight defined as $(x+y,x-y,-x+y,-x-y,z+w,z-w,-z+w,-z-w)$. As at least two terms of this sequence are equal, we get a $\{\pm 1\}$-weighted zero-sum subsequence of consecutive terms of $S$. Thus $C_{\{\pm 1\}}(7)\leq 4$.  
\end{proof}

\section{When $A=U(n)$}

\begin{lem}\label{multi}
Let $n=m_1m_2$ and $A,A_1,A_2$ be subsets of $\mathbb Z_n,\mathbb Z_{m_1},\mathbb Z_{m_2}$. Suppose $f_{n,m_1}(A)\subseteq A_1$ and $f_{n,m_2}(A)\subseteq A_2$. Then $C_A(n)\geq C_{A_1}(m_1)C_{A_2}(m_2)$. 
\end{lem}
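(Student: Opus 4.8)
The plan is to exhibit an explicit sequence in $\mathbb{Z}_n$ of length $C_{A_1}(m_1)C_{A_2}(m_2) - 1$ that has no $A$-weighted zero-sum subsequence of consecutive terms, which shows $C_A(n) \geq C_{A_1}(m_1)C_{A_2}(m_2)$. Write $k_1 = C_{A_1}(m_1)$ and $k_2 = C_{A_2}(m_2)$. By definition of $C_{A_1}(m_1)$, there is a sequence $T_1 = (u_1, \ldots, u_{k_1 - 1})$ in $\mathbb{Z}_{m_1}$ with no $A_1$-weighted zero-sum subsequence of consecutive terms, and similarly a sequence $T_2 = (v_1, \ldots, v_{k_2 - 1})$ in $\mathbb{Z}_{m_2}$ with no $A_2$-weighted zero-sum subsequence of consecutive terms. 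Using the Chinese Remainder Theorem isomorphism $\mathbb{Z}_n \cong \mathbb{Z}_{m_1} \times \mathbb{Z}_{m_2}$ (valid when $\gcd(m_1,m_2)=1$; I will note this hypothesis, which is implicit in the setup since $f_{n,m_i}$ is being treated as a coordinate projection), the idea is to build a sequence in $\mathbb{Z}_n$ of length $k_1 k_2 - 1$ whose two coordinate projections ``encode'' $T_1$ and $T_2$ in such a way that a consecutive block summing to an $A$-weighted zero in $\mathbb{Z}_n$ would force a consecutive sub-block of $T_1$ to be $A_1$-weighted zero and a consecutive sub-block of $T_2$ to be $A_2$-weighted zero.

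The construction I have in mind is the standard ``block'' one: arrange $k_1 k_2 - 1$ positions, and think of them (up to the last missing position) as a $k_2 \times k_1$ array read in some fixed order. More concretely, for a consecutive-block problem the natural choice is: in the first coordinate put the periodic pattern that runs through $T_1$ repeatedly but separated appropriately, and in the second coordinate put a pattern that advances through $T_2$ only once per full pass of $T_1$. The key point to verify is that any interval of consecutive indices, when projected to $\mathbb{Z}_{m_1}$, yields a sum equal to a sum of a consecutive sub-block of $T_1$ (possibly empty, i.e. a single term or a run), and likewise the projection to $\mathbb{Z}_{m_2}$ yields a consecutive sub-block sum of $T_2$; and crucially that these sub-blocks are not both empty unless the original interval is empty. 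Then if the interval were $A$-weighted zero in $\mathbb{Z}_n$, applying $f_{n,m_1}$ and using $f_{n,m_1}(A) \subseteq A_1$ would give an $A_1$-weighted zero consecutive sub-block of $T_1$, contradicting the choice of $T_1$ (and symmetrically for $T_2$), unless the contributing sub-block on that side is empty — but it cannot be empty on both sides simultaneously.

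The main obstacle is getting the combinatorics of ``consecutive'' exactly right: unlike the unrestricted Davenport constant, where one may freely juxtapose $T_1$ and $T_2$, here a consecutive interval in the big sequence must be forced to project to a consecutive interval in each small sequence. The cleanest way I expect to handle this is to let the big sequence be indexed $1, \ldots, k_1 k_2$ and drop the last index, and define the term at position $i$ to have first coordinate depending on $i \bmod k_1$ (cycling through $u_1, \ldots, u_{k_1-1}$ and a $0$ at the wrap position) and second coordinate nonzero only at positions that are multiples of $k_1$ (cycling through $v_1, \ldots, v_{k_2-1}$), with appropriate $0$'s elsewhere, so that summing a consecutive run telescopes correctly in each coordinate. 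I would then check the two extreme cases — a short interval contained in a single length-$k_1$ block (handled by $T_1$) and a longer interval spanning several blocks (handled by $T_2$, after the $T_1$-contributions within complete blocks vanish) — and confirm no interval escapes both arguments. Once the construction and these case checks are in place, the inequality $C_A(n) \geq k_1 k_2$ follows immediately.
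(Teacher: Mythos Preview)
Your block construction is essentially the one the paper uses, but there is a real gap: you impose $\gcd(m_1,m_2)=1$ in order to invoke the Chinese Remainder Theorem, and this hypothesis is \emph{not} part of the lemma. The maps $f_{n,m_i}$ are just the quotient maps $\mathbb Z_n\to\mathbb Z_{m_i}$, defined whenever $m_i\mid n$; they are not coordinate projections in general. Moreover, the non-coprime case is genuinely needed later in the paper (e.g.\ the induction on $\Omega(n)$ in Corollary~\ref{unit} peels off one prime at a time, so for $n=p^r$ one applies the lemma with $m_1=p$, $m_2=p^{r-1}$).

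The paper sidesteps CRT as follows. Lift $T_1=(u_1,\ldots,u_{k_1-1})$ to elements $x_i\in\mathbb Z_n$ with $f_{n,m_1}(x_i)=u_i$, and lift $T_2=(v_1,\ldots,v_{k_2-1})$ to $y_j\in\mathbb Z_n$ with $f_{n,m_2}(y_j)=v_j$. Then take the sequence
\[
(m_2x_1,\ldots,m_2x_{k_1-1},\,y_1,\,m_2x_1,\ldots,m_2x_{k_1-1},\,y_2,\,\ldots,\,y_{k_2-1},\,m_2x_1,\ldots,m_2x_{k_1-1})
\]
of length $k_1k_2-1$. Multiplying the $x_i$'s by $m_2$ forces those terms to vanish under $f_{n,m_2}$ regardless of coprimality, so if a consecutive $A$-weighted zero-sum block contains some $y_j$ then its image under $f_{n,m_2}$ is an $A_2$-weighted zero-sum on a consecutive run of $T_2$. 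If no $y_j$ appears, the block lies inside one copy of $(m_2x_1,\ldots,m_2x_{k_1-1})$; now divide the relation $\sum a_i(m_2x_i)\equiv 0\pmod n$ by $m_2$ and reduce mod $m_1$ to get an $A_1$-weighted zero-sum on a consecutive run of $T_1$. This division step is what replaces your ``project to the first coordinate'' and is exactly what makes the argument work without coprimality.

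One smaller point: your phrase ``after the $T_1$-contributions within complete blocks vanish'' is misleading---those contributions need not vanish in $\mathbb Z_{m_1}$; the point is simply that they vanish under $f_{n,m_2}$, so only the $y_j$ terms survive on that side. And don't forget the degenerate cases $k_1=1$ or $k_2=1$, which the paper handles separately.
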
 

\begin{proof}
Let $C_{A_1}(m_1)=k$ and $C_{A_2}(m_2)=l$. Assume that $k$ and $l$ are at least $2$. There exists a sequence $S_1'=(x_1',\ldots,x_{k-1}')$ of length $k-1$ in $\mathbb Z_{m_1}$ which has no $A_1$-weighted zero-sum subsequence of consecutive terms. Also, there exists a sequence $S_2'=(y_1',\ldots, y_{l-1}')$ of length $l-1$ in $\mathbb Z_{m_2}$ which has no $A_2$-weighted zero-sum subsequence of consecutive terms. 

For each $i\in [1,k-1]$ let $f_{n,m_1}(x_i)=x_i'$ and $S_1=(m_2x_1,\,\ldots,\,m_2x_{k-1})$. For each $j\in [1,l-1]$ let  $f_{n,m_2}(y_j)=y_j'$ and $S_2=(y_1,\ldots,y_{l-1})$. Define a sequence $S$ of length $(k-1)l+l-1=kl-1$ in $\mathbb Z_n$ as  
$$(m_2x_1,\,\ldots,\,m_2x_{k-1},\,y_1,\,m_2x_1,\,\ldots,\,m_2x_{k-1},\,y_2,\,\ldots,\,y_{l-1},\,m_2x_1,\,\ldots,\,m_2x_{k-1})$$ 

Suppose $S$ has an $A$-weighted zero-sum subsequence $T$ of consecutive terms. If $T$ contains some term of $S_2$, we will get a subsequence $S_3$ which has consecutive terms of $S_2$ such that the image of the sequence $S_3$ under $f_{n,m_2}$ has an $A_2$-weighted zero-sum subsequence of consecutive terms, as $f_{n,m_2}(A)\subseteq A_2$. This is not possible by our choice of $S_2'$. Thus, $T$ does not contain any term of $S_2$ and so $T$ is a subsequence of $S_1$. 

Let $T'$ be the sequence in $\mathbb Z_{m_1}$ whose terms are obtained by dividing the terms of $T$ by $m_2$ and taking their images under $f_{n,m_1}$. As $f_{n,m_1}(A)\subseteq A_1$, we will get the contradiction that $T'$ is an $A_1$-weighted zero-sum subsequence of consecutive terms of $S_1'$. Hence, we see that $S$ does not have any $A$-weighted zero-sum subsequence of consecutive terms. As $S$ has length $kl-1$, it follows that $C_A(n)\geq kl$ when both $k$ and $l$ are at least two.

If $k=l=1$, we are done. Suppose exactly one of $k$ and $l$ is equal to one. We can assume that $l=1$ and $k>1$. As the sequence $S_1$ which was defined earlier in this proof does not have any $A$-weighted zero-sum subsequence of consecutive terms, we see that $C_A(n)\geq k$. 
\end{proof}

\begin{cor}\label{unit}
For any natural number $n$, we have $C_{U(n)}(n)\geq 2^{\Omega(n)}$.
\end{cor}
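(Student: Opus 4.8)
The plan is to derive Corollary \ref{unit} directly from Lemma \ref{multi} by induction on $\Omega(n)$, using the prime factorization of $n$. First I would write $n = p_1 p_2 \cdots p_r$ where $r = \Omega(n)$ and the $p_i$ are primes (not necessarily distinct). The base case $r = 0$ gives $n = 1$, where $U(1)$ is trivial and $C_{U(1)}(1) = 1 = 2^0$, and the base case $r = 1$ gives $n = p$ a prime, where Theorem \ref{nz} (applied with $A = U(p) = \mathbb Z_p \setminus \{0\}$) yields $C_{U(p)}(p) = 2 = 2^1$.

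For the inductive step, I would factor $n = m_1 m_2$ with $m_1 = p_1$ and $m_2 = p_2 \cdots p_r$, so that $\Omega(m_1) = 1$ and $\Omega(m_2) = r - 1$. The key observation is that the reduction homomorphisms $f_{n,m_1}$ and $f_{n,m_2}$ carry units to units: if $a \in U(n)$ then $\gcd(a, n) = 1$, hence $\gcd(a, m_1) = 1$ and $\gcd(a, m_2) = 1$, so $f_{n,m_1}(a) \in U(m_1)$ and $f_{n,m_2}(a) \in U(m_2)$. Thus with $A = U(n)$, $A_1 = U(m_1)$, $A_2 = U(m_2)$ the hypotheses of Lemma \ref{multi} are satisfied, giving $C_{U(n)}(n) \geq C_{U(m_1)}(m_1) \cdot C_{U(m_2)}(m_2)$. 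Applying the base case to $m_1$ and the induction hypothesis to $m_2$, this is $\geq 2 \cdot 2^{r-1} = 2^r = 2^{\Omega(n)}$, completing the induction.

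I do not anticipate a genuine obstacle here; the only point requiring a little care is the bookkeeping of the containments $f_{n,m_i}(U(n)) \subseteq U(m_i)$, which is the standard fact that reduction modulo a divisor sends units to units. One could alternatively avoid the induction entirely by iterating Lemma \ref{multi} once and splitting $m_2$ further each time, but phrasing it as an induction on $\Omega(n)$ is cleanest. It is worth noting in passing that for \emph{odd} $n$ this bound is in fact sharp — the paper states $C_{U(n)}(n) = 2^{\Omega(n)}$ for odd $n$ — so the matching upper bound must be proved separately; but for the present corollary only the lower bound is claimed, and it follows immediately from the above.
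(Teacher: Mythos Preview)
Your proposal is correct and follows essentially the same approach as the paper: induction on $\Omega(n)$, using Lemma \ref{multi} together with the containment $f_{n,m}(U(n))\subseteq U(m)$ for any divisor $m$ of $n$, and Theorem \ref{nz} for the prime base case. The paper's proof is simply a terser version of what you wrote.
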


\begin{proof}
If $m$ is a divisor of $n$, we have $f_{n,m}\big(U(n)\big)\subseteq U(m)$. Also, for a prime $p$ we have $U(p)=\mathbb Z_p\setminus \{0\}$. Thus, the result follows from Theorem \ref{nz} and Lemma \ref{multi} by induction on $\Omega(n)$.
\end{proof}

\begin{cor}\label{pm}
Let $n=2^k$ for some $k$. Then $C_{U(n)}(n)=C_{\{\pm 1\}}(n)=n$. 
\end{cor}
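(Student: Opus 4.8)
The plan is to show that $n \le C_{U(n)}(n) \le C_{\{\pm 1\}}(n) \le n$, which forces all three quantities to equal $n$.

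For the two upper bounds, I would first note that when $n = 2^k$ both $1$ and $-1 = n-1$ are odd, hence units modulo $n$, so $\{\pm 1\} \subseteq U(n)$. In general, if $A \subseteq B \subseteq R$ then $C_B(M) \le C_A(M)$, since any $A$-weighted zero-sum subsequence of consecutive terms is automatically a $B$-weighted one (the weights used lie in $A \subseteq B$); applying this with $A = \{\pm 1\}$ and $B = U(n)$ gives $C_{U(n)}(n) \le C_{\{\pm 1\}}(n)$. The remaining inequality $C_{\{\pm 1\}}(n) \le n$ is just the general bound $C_A(M) \le |M|$ from the Observation in Section \ref{0}, taken with $M = \mathbb Z_n$ and $A = \{\pm 1\}$.

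For the matching lower bound, I would invoke Corollary \ref{unit}, which gives $C_{U(n)}(n) \ge 2^{\Omega(n)}$ for every natural number $n$. Since $n = 2^k$ has $\Omega(n) = k$, this reads $C_{U(n)}(n) \ge 2^k = n$, and combining with the two upper bounds closes the sandwich.

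There is essentially no obstacle here beyond bookkeeping: the substantive content is already contained in Lemma \ref{multi} and Corollary \ref{unit}. The only points needing a moment's care are the direction of monotonicity of $C_A$ in the weight set $A$ and the trivial verification that $\{\pm 1\} \subseteq U(2^k)$; the degenerate case $k = 1$ (where $-1 = 1$ and $U(2) = \{1\}$) still satisfies the whole chain of inequalities, since $C_{U(2)}(2) = C_{\{\pm 1\}}(2) = C(2) = 2$ by Corollary \ref{cn}.
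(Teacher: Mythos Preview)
Your proof is correct and follows essentially the same route as the paper: you sandwich $C_{U(n)}(n)$ and $C_{\{\pm 1\}}(n)$ between $n$ and $n$ using the inclusion $\{\pm 1\}\subseteq U(n)$, the lower bound $C_{U(n)}(n)\geq 2^{\Omega(n)}$ from Corollary~\ref{unit}, and an upper bound of $n$. The only cosmetic difference is that the paper obtains the upper bound via $C_{\{\pm 1\}}(n)\leq C(n)=n$ (using $\{1\}\subseteq\{\pm 1\}$ and Corollary~\ref{cn}) rather than the general bound $C_A(M)\leq |M|$, but this is the same content.
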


\begin{proof}
As $\{1\}\subseteq \{\pm 1\}\subseteq U(n)$, it follows that $C_{U(n)}(n)\leq C_{\{\pm 1\}}(n)\leq C(n)$. Thus, from Corollaries \ref{cn} and \ref{unit} we get $2^k\leq C_{U(n)}(n)\leq C_{\{\pm 1\}}(n)\leq n$. So we see that $C_{U(n)}(n)=C_{\{\pm 1\}}(n)=n$. 
\end{proof}
 
Let $p$ be a prime divisor of $n$. We use the notation $v_p(n)=r$ to mean $p^r\mid n$ and $p^{r+1}\nmid n$. Let $S$ be a sequence in $\mathbb Z_n$. Suppose $p$ is a prime divisor of $n$ with $v_p(n)=r$. Let $S^{(p)}$ be the sequence in $\mathbb Z_{p^r}$ which is the image of the sequence $S$ under $f_{n,p^r}$. The following result is  Observation 2.2 of \cite{sg}. We restate it here using our notation.

\begin{obs}\label{obs}
A sequence $S$ is a $U(n)$-weighted zero-sum sequence in $\mathbb Z_n$ if and only if for every prime divisor $p$ of $n$ we have that the sequence $S^{(p)}$ is a $U\big(p^{v_p(n)}\big)$-weighted zero-sum sequence in $\mathbb Z_{p^{v_p(n)}}$.
\end{obs}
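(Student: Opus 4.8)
The statement to prove is Observation \ref{obs}: a sequence $S$ is a $U(n)$-weighted zero-sum sequence in $\mathbb{Z}_n$ if and only if for every prime divisor $p$ of $n$, the image sequence $S^{(p)}$ is a $U(p^{v_p(n)})$-weighted zero-sum sequence in $\mathbb{Z}_{p^{v_p(n)}}$.

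\medskip

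The plan is to reduce everything to the Chinese Remainder Theorem together with the local-to-global description of units. Write $n = \prod_{i=1}^t p_i^{r_i}$ with $r_i = v_{p_i}(n)$, and recall the ring isomorphism $\mathbb{Z}_n \cong \prod_{i=1}^t \mathbb{Z}_{p_i^{r_i}}$ induced by the maps $f_{n,p_i^{r_i}}$; under this isomorphism $U(n) \cong \prod_{i=1}^t U(p_i^{r_i})$, so an element $a \in \mathbb{Z}_n$ lies in $U(n)$ if and only if $f_{n,p_i^{r_i}}(a) \in U(p_i^{r_i})$ for every $i$. Let $S = (x_1,\ldots,x_k)$.

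\medskip

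For the forward direction, suppose $S$ is $U(n)$-weighted zero-sum, so there exist $a_1,\ldots,a_k \in U(n)$ with $\sum_j a_j x_j = 0$ in $\mathbb{Z}_n$. Fix a prime divisor $p = p_i$ with $r = r_i$. Applying the homomorphism $f_{n,p^r}$ to the relation $\sum_j a_j x_j = 0$ gives $\sum_j f_{n,p^r}(a_j)\, f_{n,p^r}(x_j) = 0$ in $\mathbb{Z}_{p^r}$; since $f_{n,p^r}(a_j) \in U(p^r)$ for each $j$ and $f_{n,p^r}(x_j)$ is the $j$-th term of $S^{(p)}$, this exhibits $S^{(p)}$ as a $U(p^r)$-weighted zero-sum sequence. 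Conversely, suppose for every prime divisor $p_i$ the sequence $S^{(i)} := S^{(p_i)}$ is $U(p_i^{r_i})$-weighted zero-sum, so there are units $a_j^{(i)} \in U(p_i^{r_i})$ with $\sum_j a_j^{(i)}\, f_{n,p_i^{r_i}}(x_j) = 0$ in $\mathbb{Z}_{p_i^{r_i}}$. For each $j$, use the isomorphism $\mathbb{Z}_n \cong \prod_i \mathbb{Z}_{p_i^{r_i}}$ to choose $a_j \in \mathbb{Z}_n$ with $f_{n,p_i^{r_i}}(a_j) = a_j^{(i)}$ for all $i$; since each coordinate is a unit, $a_j \in U(n)$. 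Then $\sum_j a_j x_j$ is an element of $\mathbb{Z}_n$ all of whose images under the $f_{n,p_i^{r_i}}$ vanish, hence it is $0$ by injectivity of the product map, so $S$ is a $U(n)$-weighted zero-sum sequence.

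\medskip

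I do not expect any genuine obstacle here: the argument is a direct application of CRT and the only point requiring a word of care is the choice of a consistent system of weights $a_j \in U(n)$ in the converse direction, which is exactly what surjectivity of the CRT isomorphism (and the fact that a tuple of units is a unit) provides. Since this is quoted verbatim as Observation 2.2 of \cite{sg}, the write-up can afford to be brief, citing the CRT decomposition of $\mathbb{Z}_n$ and $U(n)$ and then recording the two implications as above.
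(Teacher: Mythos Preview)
Your proof is correct and is exactly the standard CRT argument one expects. Note, however, that the paper does not give its own proof of this observation: it is stated without proof as a restatement of Observation~2.2 of \cite{sg}, so there is nothing to compare against beyond the bare citation. Your write-up is a perfectly good justification to include if a self-contained argument is desired.
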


We have the following more general result. For a  prime divisor $p$ of $n$ with $v_p(n)=r$, let $A^j_p=\{x^j:x\in U(p^{r})\}$.

\begin{obs}\label{obs2}
A sequence $S$ is a $U(n)^j$-weighted zero-sum sequence in $\mathbb Z_n$ if and only if for every prime divisor $p$ of $n$ we have that the sequence $S^{(p)}$ is an $A_p^j$-weighted zero-sum sequence in $\mathbb Z_{p^{v_p(n)}}$.  
\end{obs}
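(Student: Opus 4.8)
The plan is to reduce everything to the Chinese Remainder Theorem, exactly as is done for Observation \ref{obs}, which is the special case $j=1$. Write $n=\prod_{p\mid n}p^{v_p(n)}$ and let $\psi:\mathbb Z_n\to\prod_{p\mid n}\mathbb Z_{p^{v_p(n)}}$ be the ring isomorphism whose $p$-th coordinate is $f_{n,p^{v_p(n)}}$; thus when $x$ is a term of $S$, the $p$-th coordinate of $\psi(x)$ is the corresponding term of $S^{(p)}$. This isomorphism restricts to a group isomorphism $U(n)\cong\prod_{p\mid n}U(p^{v_p(n)})$.

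The key algebraic point I would isolate first is the identity $\psi\big(U(n)^j\big)=\prod_{p\mid n}A_p^j$. Indeed, if $u\in U(n)$ has $\psi(u)=(u_p)_p$ with $u_p\in U(p^{v_p(n)})$, then $\psi(u^j)=(u_p^j)_p\in\prod_p A_p^j$; conversely any tuple $(v_p^j)_p\in\prod_p A_p^j$ with $v_p\in U(p^{v_p(n)})$ equals $\psi(u^j)$, where $u\in U(n)$ is the unique unit with $\psi(u)=(v_p)_p$. Hence $\psi$ matches $U(n)^j$ with the product of the sets $A_p^j$ coordinate by coordinate.

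With this in hand the equivalence is immediate. Write $S=(x_1,\dots,x_k)$. An equality $a_1x_1+\cdots+a_kx_k=0$ holds in $\mathbb Z_n$ if and only if, after applying $\psi$, it holds in every coordinate, i.e.\ for each prime $p\mid n$ the corresponding weighted sum of the terms of $S^{(p)}$ vanishes in $\mathbb Z_{p^{v_p(n)}}$. For the ``only if'' direction: if the $a_i$ witness that $S$ is $U(n)^j$-weighted zero-sum, then for each $p$ the coordinates $f_{n,p^{v_p(n)}}(a_i)$ lie in $A_p^j$ by the identity above and witness that $S^{(p)}$ is $A_p^j$-weighted zero-sum. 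For the ``if'' direction: for each $p\mid n$ choose weights $b_i^{(p)}\in A_p^j$ witnessing that $S^{(p)}$ is $A_p^j$-weighted zero-sum; by the Chinese Remainder Theorem pick $a_i\in\mathbb Z_n$ with $f_{n,p^{v_p(n)}}(a_i)=b_i^{(p)}$ for all $p$, and then $a_i\in U(n)^j$ by the identity and $a_1x_1+\cdots+a_kx_k=0$ since it holds in every coordinate.

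I do not expect a genuine obstacle here: the only subtlety is that in the ``if'' direction the per-prime weight choices need not be compatible as written, but that is precisely what the Chinese Remainder Theorem repairs. It is worth noting explicitly that setting $j=1$ recovers Observation \ref{obs}.
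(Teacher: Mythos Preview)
Your argument is correct. The paper does not actually supply a proof of this observation; it simply states it as the evident generalisation of Observation~\ref{obs} (itself quoted from \cite{sg}), so your Chinese Remainder Theorem argument is exactly the intended justification and there is nothing to compare against.
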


\begin{lem}\label{d}
Let $p$ be a prime divisor of $n$ and $n'=n/p$. Suppose $c'\in U(n')$. Then there exists $c\in U(n)$ such that $f_{n,n'}(c)=c'$.
\end{lem}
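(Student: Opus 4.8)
The plan is to lift $c'$ through the reduction map $f_{n,n'}$ by a short case analysis on whether $p$ divides $n'$, using throughout the elementary fact that an element of $\mathbb Z_n$ is a unit precisely when it is coprime to every prime divisor of $n$, and that the prime divisors of $n=pn'$ are exactly $p$ together with the prime divisors of $n'$.

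First I would observe that, writing the given $c'$ with representative $0\le c'<n'$, the preimages of $c'$ under $f_{n,n'}$ are exactly the residues $c'+kn'$ for $k\in[0,p-1]$. Each such residue reduces to $c'$ modulo $n'$, so since $\gcd(c',n')=1$ it is already coprime to every prime divisor of $n'$. Hence it suffices to choose $k$ so that $p\nmid c'+kn'$; for that choice $c=c'+kn'$ is coprime to every prime divisor of $n$, i.e.\ $c\in U(n)$, and $f_{n,n'}(c)=c'$ by construction.

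It remains to make the choice of $k$. If $p\mid n'$ (equivalently $v_p(n)\ge 2$), then $\gcd(c',n')=1$ forces $p\nmid c'$, and $c'+kn'\equiv c'\pmod p$ for every $k$; so even $k=0$ works, and the residue $c'$ itself, viewed in $\mathbb Z_n$, is the required unit. If $p\nmid n'$, then $n'$ is invertible modulo $p$, so as $k$ runs over $[0,p-1]$ the values $kn'\bmod p$ run over all of $\mathbb Z_p$; thus $c'+kn'\bmod p$ takes every value in $\mathbb Z_p$ and in particular is nonzero for $p-1$ choices of $k$, any of which gives the desired $c$. (Equivalently, in this case the Chinese Remainder Theorem yields $U(n)\cong U(p)\times U(n')$, and one takes $c$ corresponding to $(1,c')$.) I do not expect any genuine obstacle here; the only point needing a little care is the bookkeeping that coprimality to $n'$ together with coprimality to $p$ gives coprimality to $n$, which is immediate from the list of prime divisors of $n$.
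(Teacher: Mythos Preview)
Your proof is correct and follows essentially the same approach as the paper: both split into the cases $p\mid n'$ and $p\nmid n'$, handling the former by observing that $n$ and $n'$ then have the same prime divisors so $c'$ itself lifts, and the latter via the Chinese Remainder Theorem (you additionally spell out the explicit lift $c'+kn'$, which the paper omits).
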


\begin{proof}
Let $n'=n/p$. If $p$ does not divide $n'$, by the Chinese remainder theorem we have an isomorphism $\psi:\mathbb Z_n\to\mathbb Z_{n'}\times\mathbb Z_p$. If $c\in U(n)$ such that $\psi(c)=(c',1)$, we have that $f_{n,n'}(c)=c'$.

If $p$ divides $n'$, then $n$ and $n'$ have the same prime factors. As $c'$ is coprime to $n'$, it follows that $c'$ is also coprime to $n$. Thus there exists $c'\in U(n)$ such that $f_{n,n'}(c')=c'$.
\end{proof}

\begin{lem}\label{b}
Let $S$ be a sequence in $\mathbb Z_n$ and $p$ be a prime divisor of $n$ which divides every element of $S$. Suppose $n'=n/p$ and $S'$ is the sequence in $\mathbb Z_{n'}$ whose terms are obtained by dividing the terms of $S$ by $p$. If $S'$ is a $U(n')$-weighted zero-sum sequence, then $S$ is a $U(n)$-weighted zero-sum sequence. Also, if $S'$ is a $U(n')^2$-weighted zero-sum sequence, then $S$ is a $U(n)^2$-weighted zero-sum sequence. 
\end{lem}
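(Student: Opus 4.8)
The plan is to reduce the statement to a lifting argument for the weights, with Lemma \ref{d} as the only real input. Write $S=(x_1,\dots,x_k)$ and, for each $i$, choose an integer lift of $x_i$ and divide by $p$ to obtain the $i$-th term $x_i'$ of $S'$ in $\mathbb Z_{n'}$; at the level of integer representatives this means $x_i\equiv p\,x_i'\pmod n$. (That this is well defined modulo $n'=n/p$ is exactly what makes $S'$ well defined, as already noted in the statement.)

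First suppose $S'$ is a $U(n')$-weighted zero-sum sequence, so there exist $a_1',\dots,a_k'\in U(n')$ with $a_1'x_1'+\cdots+a_k'x_k'\equiv 0\pmod{n'}$. By Lemma \ref{d}, for each $i$ I can pick a unit $a_i\in U(n)$ with $f_{n,n'}(a_i)=a_i'$, i.e. $a_i\equiv a_i'\pmod{n'}$. Then I would compute $\sum_i a_ix_i\equiv p\sum_i a_ix_i'\pmod n$, and since $a_ix_i'\equiv a_i'x_i'\pmod{n'}$ the inner sum satisfies $\sum_i a_ix_i'\equiv 0\pmod{n'}$, say $\sum_i a_ix_i'=n't$ for some integer $t$; hence $p\sum_i a_ix_i'=nt\equiv 0\pmod n$, which shows $S$ is a $U(n)$-weighted zero-sum sequence.

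For the second assertion the argument is identical, starting instead from weights $b_1',\dots,b_k'\in U(n')$ with $\sum_i (b_i')^2x_i'\equiv 0\pmod{n'}$, lifting each $b_i'$ to a unit $b_i\in U(n)$ via Lemma \ref{d}, and using $b_i^2\equiv (b_i')^2\pmod{n'}$ to get $\sum_i b_i^2x_i\equiv p\sum_i (b_i')^2x_i'\equiv 0\pmod n$ with each $b_i^2\in U(n)^2$.

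I do not expect a genuine obstacle: the only points needing care are that reduction of a congruence modulo $n'$ yields a congruence modulo $n$ after multiplication by $p$ (immediate, since $p n' = n$) and that every unit modulo $n'$ lifts to a unit modulo $n$ — and the latter is precisely Lemma \ref{d}. The same proof plainly extends to $U(n')^j$ for any $j$, should that be needed later.
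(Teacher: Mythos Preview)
Your proof is correct and follows essentially the same approach as the paper: lift each weight from $U(n')$ to $U(n)$ via Lemma~\ref{d}, then use that a congruence modulo $n'$ becomes a congruence modulo $n$ after multiplying by $p$. The paper phrases the computation through the map $f_{n,n'}$ rather than explicit integer congruences, but the argument is the same; your closing remark that the proof works for $U(n')^j$ is also implicitly used later in the paper (for cubes).
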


\begin{proof}
Let $S=(x_1,\ldots,x_k)$. Then $S'=(x_1',\ldots,x_k')$ where for each $i\in [1,k]$ we have   $x_i'=f_{n,n'}(x_i/p)$. 
Suppose $S'$ is a $U(n')$-weighted zero-sum sequence. 
Then for each $i\in [1,k]$ there exist $a_i'\in U(n')$ such that $a_1'x_1'+\cdots+a_k'x_k'=0$. From Lemma \ref{d} we see that for $i\in [1,k]$, there exist $a_i\in U(n)$ such that $f_{n,n'}(a_i)=a_i'$. As $a_1'x_1'+\cdots+a_k'x_k'=0$ it follows that $f_{n,n'}\big((a_1x_1+\cdots+a_kx_k)/p\big)=0$. As $n'$ divides $(a_1x_1+\cdots+a_kx_k)/p$, we get that $n$ divides $a_1x_1+\cdots+a_kx_k$ and so $a_1x_1+\cdots+a_kx_k=0$ in $\mathbb Z_n$.  Thus, $S$ is a $U(n)$-weighted zero-sum sequence. The other assertion can be proved in a  similar manner. 
\end{proof}

For the next theorem we need the following (\cite{sg}, Lemma 2.1 (ii)), which we restate here using our terminology:

\begin{lem}\label{gri}
Let $p$ be an odd prime. If a sequence $S$ over $\mathbb Z_{p^r}$ has at least two terms coprime to $p$, then $S$ is a $U(p^r)$-weighted zero-sum sequence. 
\end{lem}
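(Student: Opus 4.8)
The plan is to fix two positions $i \neq j$ at which the terms of $S = (x_1,\dots,x_k)$ are coprime to $p$, hence units in $\mathbb{Z}_{p^r}$, assign the weight $1$ to every other term, and then choose the two remaining weights in $U(p^r)$ so that the total weighted sum vanishes. Writing $s = -\sum_{m \neq i,j} x_m$ (interpreted as $0$ if $k=2$), it suffices to solve $a_i x_i + a_j x_j = s$ with $a_i, a_j \in U(p^r)$. Since $x_i$ is a unit, dividing by $x_i$ and putting $u = a_i$, $v = x_j x_i^{-1} \in U(p^r)$, $t = s x_i^{-1} \in \mathbb{Z}_{p^r}$ turns this into $u + a_j v = t$, i.e.\ $a_j = (t-u)v^{-1}$. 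So the whole problem reduces to finding a single $u \in U(p^r)$ for which $t - u$ is also a unit.

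For this I would argue modulo $p$: an element of $\mathbb{Z}_{p^r}$ is a unit exactly when it is nonzero modulo $p$, so I need $\bar{u} \in \mathbb{Z}_p$ with $\bar{u} \neq 0$ and $\bar{u} \neq \bar{t}$. Because $p$ is odd, $p \geq 3$, so $\mathbb{Z}_p \setminus \{0,\bar{t}\}$ has at least $p-2 \geq 1$ elements; picking such a $\bar{u}$ and lifting it arbitrarily to $u \in \mathbb{Z}_{p^r}$ gives $u \in U(p^r)$ and $t-u \in U(p^r)$, completing the construction. A counting variant of the same point: $U(p^r)$ and $t - U(p^r)$ each have $\varphi(p^r) = p^{r-1}(p-1)$ elements, and $2p^{r-1}(p-1) > p^r$ precisely when $p > 2$, so the two sets must intersect.

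Finally I would dispose of the degenerate case $k = 2$, which is immediate since then one may take $a_i = x_j$ and $a_j = -x_i$, both units; the hypothesis that $S$ has at least two terms coprime to $p$ guarantees $k \geq 2$ and that the positions $i,j$ exist, so no other cases arise. I do not anticipate a real obstacle: the only delicate point is the use of $p \geq 3$, which is precisely where oddness of $p$ enters — for $p = 2$ one can have $\mathbb{Z}_p \setminus \{0,\bar{t}\} = \varnothing$, and the statement is indeed false there. The substance of the argument is just the reduction to ``find a unit $u$ with $t-u$ a unit'', after which the residue count is trivial.
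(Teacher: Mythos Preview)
Your argument is correct. The reduction to finding a unit $u$ with $t-u$ also a unit is clean, and the residue count modulo $p$ (or equivalently the cardinality bound $2\varphi(p^r)>p^r$ for $p$ odd) settles it immediately. The separate treatment of $k=2$ is unnecessary, since your general argument already handles $s=0$, but it does no harm.

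As for comparison with the paper: there is nothing to compare against. The paper does not prove this lemma at all; it simply quotes it from Griffiths \cite{sg}, Lemma~2.1(ii), and uses it as a black box. Your write-up therefore supplies a short self-contained proof where the paper relies on an external reference, which is a net gain. The argument you give is essentially the standard one and is in the same spirit as Griffiths' original.
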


\begin{thm}\label{unit2}
When $n$ is odd, we have $C_{U(n)}(n)\leq  2^{\Omega(n)}$.
\end{thm}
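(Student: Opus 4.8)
The plan is to argue by induction on $\Omega(n)$. When $\Omega(n)=1$ the number $n=p$ is an odd prime, $U(p)=\mathbb Z_p\setminus\{0\}$, and so $C_{U(p)}(p)=2=2^{\Omega(p)}$ by Theorem \ref{nz}. So suppose $\Omega(n)=k\geq 2$ and that the bound holds for every odd number with fewer prime factors; let $S=(x_1,\ldots,x_N)$ be a sequence in $\mathbb Z_n$ with $N=2^k$. I must produce a $U(n)$-weighted zero-sum subsequence of consecutive terms of $S$, and I split into two cases according to how the terms of $S$ behave modulo the prime divisors of $n$.

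Suppose first that for \emph{every} prime divisor $p$ of $n$ at least two terms of $S$ are coprime to $p$. Then, writing $r=v_p(n)$, the sequence $S^{(p)}$ over $\mathbb Z_{p^r}$ has at least two terms coprime to $p$, so by Lemma \ref{gri} (applicable because $p$ is odd) $S^{(p)}$ is a $U(p^r)$-weighted zero-sum sequence. As this holds for every $p\mid n$, Observation \ref{obs} shows that $S$ itself is a $U(n)$-weighted zero-sum sequence; since $S$ is trivially a subsequence of consecutive terms of itself, we are done in this case — in fact for a sequence of any length.

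In the opposite case there is a prime $p\mid n$ such that at most one term of $S$ is coprime to $p$; hence at least $N-1$ of the terms are divisible by $p$, and these occupy at most two maximal blocks of consecutive positions whose total length is at least $N-1$. One such block, on positions $[a,b]$ say, then has length at least $\lceil(N-1)/2\rceil=2^{k-1}$. Set $n'=n/p$, an odd number with $\Omega(n')=k-1$, so by the induction hypothesis $C_{U(n')}(n')\leq 2^{k-1}$. Dividing $x_a,\ldots,x_b$ by $p$ yields a sequence $S'$ in $\mathbb Z_{n'}$ of length at least $2^{k-1}\geq C_{U(n')}(n')$, which therefore has a $U(n')$-weighted zero-sum subsequence of consecutive terms, occupying positions $[c,d]\subseteq[a,b]$ say. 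Since $p$ divides every term of $(x_c,\ldots,x_d)$ and the associated quotient sequence in $\mathbb Z_{n'}$ is $U(n')$-weighted zero-sum, Lemma \ref{b} gives that $(x_c,\ldots,x_d)$ is a $U(n)$-weighted zero-sum subsequence of consecutive terms of $S$. This completes the induction, and together with Corollary \ref{unit} it yields $C_{U(n)}(n)=2^{\Omega(n)}$ for odd $n$.

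I do not anticipate a genuine obstacle: the split into the two cases is the natural one, and the only arithmetic point that must be checked is the identity $\lceil(2^k-1)/2\rceil=2^{k-1}$, which is precisely what makes the length $2^{\Omega(n)}$ (and nothing smaller) suffice in the second case. It is worth emphasising that in the first case the whole of $S$ already works, with no need to pass to a proper sub-block, and this is exactly where the hypothesis that $n$ is odd is used, through Lemma \ref{gri}.
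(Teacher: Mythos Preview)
Your proof is correct and follows essentially the same approach as the paper: induction on $\Omega(n)$, the same two-case split, and the same appeals to Lemma~\ref{gri}, Observation~\ref{obs}, and Lemma~\ref{b}. The only cosmetic difference is in Case~2, where the paper simply partitions $S$ into two halves of length $2^{k-1}$ (one of which must avoid the single unit term), whereas you locate a maximal block of $p$-divisible terms of length at least $\lceil(2^k-1)/2\rceil=2^{k-1}$; both devices yield the same block to which the induction hypothesis applies.
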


\begin{proof}
We prove this theorem by induction on $\Omega(n)$. Let $S=(x_1,\ldots,x_k)$ be a sequence in $\mathbb Z_n$ of length $k=2^{\Omega(n)}$. If $\Omega(n)=1$ then $n$ is prime and so $U(n)=\mathbb Z_n\setminus\{0\}$. Hence we are done by using Theorem \ref{nz}. Let us now assume that $\Omega(n)>1$.

\begin{case}
For any prime divisor $p$ of $n$ at least two terms of $S$ are coprime to $p$. 
\end{case}

Let $p$ be a prime divisor of $n$ and let $r=v_p(n)$. Let $S^{(p)}$ be as defined before Observation \ref{obs}. Then $S^{(p)}$ has at least two units. As $n$ is odd it follows that $p$ is an odd prime. Hence by Lemma \ref{gri} we see that $S^{(p)}$ is a $U(p^r)$-weighted zero-sum sequence in $\mathbb Z_{p^r}$. As this is true for any prime divisor $p$ of $n$, by Observation \ref{obs} we see that $S$ is a $U(n)$-weighted zero-sum sequence.

\begin{case}
There is a prime divisor $p$ of $n$ such that at most one term of $S$ is coprime to $p$. 
\end{case}

By partitioning $S$ into two equal halves where each half has $k/2$ consecutive terms, we see that there is a subsequence $T$ of consecutive terms of $S$ of length $k/2$ such that $p$ divides every term of $T$.

Let $n'=n/p$ and $T'$ denote the sequence in $\mathbb Z_{n'}$ whose terms are obtained by dividing the terms of $T$ by $p$. As $\Omega(n')=\Omega(n)-1$ and as $T'$ is a sequence of length $2^{\Omega(n')}$ in $\mathbb Z_{n'}$, by the induction hypothesis $T'$ has a $U(n')$-weighted zero-sum subsequence of consecutive terms. From Lemma \ref{b} we see that $T$ has a $U(n)$-weighted zero-sum subsequence of consecutive terms. As $T$ is a subsequence of consecutive terms of $S$, it follows that $S$ has a $U(n)$-weighted zero-sum subsequence of consecutive terms. 
\end{proof}

\begin{cor}\label{A}
When $n$ is odd, we have $C_{U(n)}(n)=2^{\Omega(n)}$.  
\end{cor}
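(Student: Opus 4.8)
The plan is to observe that this corollary requires no new work: it is simply the conjunction of the two bounds already established. From Corollary \ref{unit} we have the lower bound $C_{U(n)}(n)\geq 2^{\Omega(n)}$, valid for every natural number $n$ (this came from iterating the multiplicativity inequality of Lemma \ref{multi} together with the base case $C_{U(p)}(p)=C_{\mathbb Z_p\setminus\{0\}}(p)=2$ of Theorem \ref{nz}). From Theorem \ref{unit2}, under the hypothesis that $n$ is odd, we have the matching upper bound $C_{U(n)}(n)\leq 2^{\Omega(n)}$.

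So the only step is to combine these two displays and conclude equality. I would state: assume $n$ is odd; then $2^{\Omega(n)}\leq C_{U(n)}(n)$ by Corollary \ref{unit}, and $C_{U(n)}(n)\leq 2^{\Omega(n)}$ by Theorem \ref{unit2}, whence $C_{U(n)}(n)=2^{\Omega(n)}$.

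There is no genuine obstacle in this corollary itself; all the substance lies in the two ingredients. If I had to point to where the care is actually needed, it is in the proof of Theorem \ref{unit2} — specifically the case analysis on whether some prime divisor $p$ of $n$ fails to have two terms of $S$ coprime to it, which forces a block of $k/2$ consecutive terms all divisible by $p$ and lets the induction on $\Omega(n)$ proceed via Lemma \ref{b}. But that work is already done and assumed here, so the corollary's proof is a one-line synthesis.
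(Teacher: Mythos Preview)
Your proposal is correct and matches the paper's own proof exactly: the corollary is deduced in one line by combining the lower bound from Corollary~\ref{unit} with the upper bound from Theorem~\ref{unit2}.
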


\begin{proof}
This follows from Corollary \ref{unit} and Theorem \ref{unit2}.
\end{proof}

\section{When $A=U(n)^2$}

This section is a generalisation of the results obtained in Section \ref{q}. We begin with the following observation. 

\begin{cor}\label{sq'}
If $n=2^rm$ where $m$ is odd, then $C_{U(n)^2}(n)\geq 2^r3^{\Omega(m)}$.
\end{cor}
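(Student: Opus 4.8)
The plan is to apply Lemma \ref{multi} with the factorisation $n = 2^r \cdot m$, so that we reduce the lower bound for $C_{U(n)^2}(n)$ to lower bounds for the two ``coordinate'' constants. First I would set $m_1 = 2^r$, $m_2 = m$, $A = U(n)^2$, $A_1 = U(2^r)^2$ and $A_2 = U(m)^2$. To invoke Lemma \ref{multi} I must check the two containments $f_{n,2^r}(A) \subseteq A_1$ and $f_{n,m}(A) \subseteq A_2$: this is immediate, since $f_{n,m_i}$ is a ring homomorphism carrying units to units, hence $f_{n,m_i}(U(n)) \subseteq U(m_i)$ and therefore $f_{n,m_i}(U(n)^2) \subseteq U(m_i)^2 = A_i$. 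Lemma \ref{multi} then yields $C_{U(n)^2}(n) \geq C_{U(2^r)^2}(2^r)\, C_{U(m)^2}(m)$.

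Next I would evaluate each factor. For the $2$-part: $U(2^r)^2$ contains $1$, and more to the point I claim $C_{U(2^r)^2}(2^r) \geq 2^r$. The cleanest way is to note $\{1\} \subseteq U(2^r)^2$, so $C_{U(2^r)^2}(2^r) \leq C(2^r) = 2^r$ by Corollary \ref{cn}; but for the \emph{lower} bound I want the reverse inequality, which does not follow from inclusion. Instead I would observe that Corollary \ref{unit} type reasoning via Lemma \ref{multi} applied to $n = 2^r$ with the prime $2$ repeatedly is not available for squares directly; rather, the honest route is to apply Lemma \ref{multi} again \emph{inside} the $2$-part, or simply to invoke the known fact $C_{U(2^r)^2}(2^r) = 2^r$ — indeed $U(2^r)^2 \subseteq U(2^r) \subseteq \mathbb Z_{2^r}\setminus\{0\}$ and the constant sequence of $2^{r}-1$ copies of a generator-type element shows $C \geq 2^r$ by the same argument as in Corollary \ref{pm}, since the relevant weight set here will still fail to produce a zero-sum consecutive subsequence of the all-ones-scaled sequence; this requires checking that no $A$-weighted consecutive subsum of $(1,\dots,1)$ of length $< 2^r$ vanishes, which holds because every such subsum is $a_1 + \cdots + a_\ell$ with $a_i \in U(2^r)^2$, all odd, and a sum of at most $2^r - 1$ odd units cannot be $\equiv 0 \pmod{2^r}$ — wait, that last claim needs care, so the safe statement is $C_{U(2^r)^2}(2^r) \geq 2^{r}$ and I would cite/derive it via induction on $r$ using Lemma \ref{multi} with $n=2^r = 2 \cdot 2^{r-1}$, giving $C_{U(2^r)^2}(2^r) \geq C_{U(2)^2}(2) \cdot C_{U(2^{r-1})^2}(2^{r-1}) = 2 \cdot 2^{r-1}$, with base case $C_{U(2)^2}(2) = C_{Q_2}(2) = 2$ from Theorem \ref{q1}.

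For the odd part I would prove $C_{U(m)^2}(m) \geq 3^{\Omega(m)}$ by induction on $\Omega(m)$, again via Lemma \ref{multi}: write $m = p \cdot m'$ with $p$ an odd prime, take $A_1 = U(p)^2 = Q_p$, $A_2 = U(m')^2$, check $f_{m,p}(U(m)^2) \subseteq Q_p$ and $f_{m,m'}(U(m)^2) \subseteq U(m')^2$ as before, and conclude $C_{U(m)^2}(m) \geq C_{Q_p}(p)\, C_{U(m')^2}(m') \geq 3 \cdot 3^{\Omega(m')} = 3^{\Omega(m)}$, using $C_{Q_p}(p) = 3$ from Theorem \ref{q1} (valid since $p$ is an odd prime) and the induction hypothesis; the base case $\Omega(m) = 0$, i.e. $m = 1$, is trivial. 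Combining the two estimates gives $C_{U(n)^2}(n) \geq 2^r \cdot 3^{\Omega(m)}$, as desired.

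The main obstacle is purely bookkeeping: one must repeatedly verify the hypothesis $f_{n,d}(U(n)^j) \subseteq U(d)^j$ of Lemma \ref{multi} and handle the degenerate cases $r = 0$ or $m = 1$ (where the corresponding factor is $1$ and Lemma \ref{multi}'s ``$k = 1$'' clause applies) so that the inequality reads correctly. There is no genuine number-theoretic difficulty here, since all the hard content — the values $C_{Q_2}(2) = 2$ and $C_{Q_p}(p) = 3$ — is already supplied by Theorem \ref{q1}, and the multiplicativity lower bound is exactly Lemma \ref{multi}.
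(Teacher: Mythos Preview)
Your proposal is correct and follows essentially the paper's approach: both arguments reduce to repeated applications of Lemma \ref{multi} together with the prime base cases $C_{Q_2}(2)=2$ and $C_{Q_p}(p)=3$ from Theorem \ref{q1}, invoking the containment $f_{n,d}(U(n)^2)\subseteq U(d)^2$ at each step. The paper does a single induction on $\Omega(n)$, whereas you first split $n=2^r\cdot m$ and then induct separately within each factor, but this is a purely organizational difference; your exploratory detour on the $2^r$ part ultimately lands on exactly the same inductive argument.
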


\begin{proof}
If $m$ is a divisor of $n$, then $f_{n,m}(U(n)^2)\subseteq U(m)^2$. Also, for a prime $p$ we have  $U(p)^2=Q_p$. Thus, the result follows from Theorem \ref{q1} and Lemma \ref{multi} by induction on $\Omega(n)$. 
\end{proof}

For the next theorem we need the following result which follows immediately from (\cite{CM}, Lemma 1). 

\begin{lem}\label{cm}
Let $n=p^r$ where $p$ is a prime which is at least seven. Suppose $S$ is a sequence in $\mathbb Z_n$ such that at least three terms of $S$ are in $U(n)$. Then $S$ is a $U(n)^2$-weighted zero-sum sequence. 
\end{lem}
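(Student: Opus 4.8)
The plan is to reduce the assertion to a statement purely about sumsets of cosets of the index-two subgroup $U(n)^2$ inside $U(n)$, and then to invoke \cite{CM}, Lemma 1. Concretely, I would pick three terms $x,y,z$ of $S$ lying in $U(n)$, attach the weight $1\in U(n)^2$ to every remaining term of $S$, and write $w\in\mathbb{Z}_n$ for the sum of these weighted remaining terms (with $w=0$ if there are none). With those choices fixed, $S$ is a $U(n)^2$-weighted zero-sum sequence precisely when there exist $a,b,c\in U(n)^2$ with $ax+by+cz=-w$, that is, precisely when $-w\in U(n)^2x+U(n)^2y+U(n)^2z$. Since $x,y,z$ are units, each of $U(n)^2x,\,U(n)^2y,\,U(n)^2z$ is a coset of $U(n)^2$ in $U(n)$, so it suffices to prove that the sum of any three such cosets is all of $\mathbb{Z}_n$; this is exactly what \cite{CM}, Lemma 1 supplies for a prime power $n=p^r$ with $p\geq 7$. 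Feeding it in gives $U(n)^2x+U(n)^2y+U(n)^2z=\mathbb{Z}_n$, which in particular contains $-w$, so the required weights exist and $S$ is $U(n)^2$-weighted zero-sum.

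The only step requiring care is reconciling the exact shape of \cite{CM}, Lemma 1 with the form used above. If that lemma is phrased for the subgroup $U(n)^2$ itself rather than for arbitrary cosets, one first multiplies every term of $S$ by a fixed unit so as to assume (say) $x\in U(n)^2$ — which changes neither the membership of the weights in $U(n)^2$ nor the zero-sum property — and then argues by cases according to whether $y$ and $z$ lie in $U(n)^2$ or in the nontrivial coset, mirroring the proof of Lemma \ref{cub5}. If instead only two-coset sumsets are available, one exploits the freedom in choosing the weight $c$ of $z$ to push $-w-cz$ into the smaller sumset $U(n)^2x+U(n)^2y$, which works because $c$ ranges over a coset of more than two elements once $p\geq 7$. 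Beyond this, everything is bookkeeping: terms of $S$ divisible by $p$ present no obstacle, since their contribution is simply absorbed into $w$. I expect this matching of the quoted lemma to be the only real content; the underlying sumset estimate (for $r=1$ it is immediate from Theorem \ref{cd}) is what makes the hypothesis $p\geq 7$ appear.
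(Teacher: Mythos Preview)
Your proposal is correct and matches the paper's approach: the paper gives no argument beyond stating that the result follows immediately from \cite{CM}, Lemma~1, and your reduction to the sumset identity $U(n)^2x+U(n)^2y+U(n)^2z=\mathbb{Z}_n$ is exactly how one unpacks that citation. (One minor wording quibble: once you fix weight $1$ on the remaining terms, the existence of $a,b,c\in U(n)^2$ with $ax+by+cz=-w$ is \emph{sufficient} rather than ``precisely'' equivalent to $S$ being $U(n)^2$-weighted zero-sum, since other weights on the remaining terms are allowed; this does not affect the argument.)
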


\begin{rem}
The conclusion of Lemma \ref{cm} may not hold when $p<7$. When $n$ is 2 or 5, the sequence $(1,1,1)$ in $\mathbb Z_n$ is not a $U(n)^2$-weighted zero-sum sequence. The sequence $(1,2,1)$ in $\mathbb Z_3$ is not a $U(3)^2$-weighted zero-sum sequence. 
\end{rem}

\begin{thm}\label{sq3}
If every prime divisor of $n$ is at least $7$, then $C_{U(n)^2}(n)\leq 3^{\Omega(n)}$.  
\end{thm}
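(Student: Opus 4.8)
The plan is to imitate the proof of Theorem~\ref{unit2}, replacing Lemma~\ref{gri} (two units force a $U(p^r)$-weighted zero-sum) by Lemma~\ref{cm} (three units force a $U(p^r)^2$-weighted zero-sum), replacing the hypothesis ``$n$ odd'' by ``every prime divisor of $n$ is at least $7$'', and replacing powers of $2$ by powers of $3$. We argue by induction on $\Omega(n)$. For the base case $\Omega(n)=1$ the number $n$ is a prime $p\geq 7$, and $U(p)^2=Q_p$, so $C_{U(n)^2}(n)=C_{Q_p}(p)=3=3^{\Omega(n)}$ by Theorem~\ref{q1}. Now assume $\Omega(n)>1$ and let $S=(x_1,\ldots,x_k)$ be a sequence in $\mathbb Z_n$ of length $k=3^{\Omega(n)}$; we must produce a $U(n)^2$-weighted zero-sum subsequence of consecutive terms.

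As in Theorem~\ref{unit2} we split into two cases. In the first case, for every prime divisor $p$ of $n$ at least three terms of $S$ are coprime to $p$. Fixing such a $p$ and setting $r=v_p(n)$, the sequence $S^{(p)}$ in $\mathbb Z_{p^r}$ then has at least three terms in $U(p^r)$, and since $p\geq 7$ we apply Lemma~\ref{cm} (with $n$ there taken to be $p^r$) to conclude that $S^{(p)}$ is a $U(p^r)^2$-weighted zero-sum sequence, that is, an $A_p^2$-weighted zero-sum sequence. Since this holds for every prime divisor $p$ of $n$, Observation~\ref{obs2} (with $j=2$) yields that $S$ itself is a $U(n)^2$-weighted zero-sum sequence, so $S$ is the required subsequence. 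In the second case there is a prime divisor $p$ of $n$ such that at most two terms of $S$ are coprime to $p$. Partition $S$ into three blocks of $k/3=3^{\Omega(n)-1}$ consecutive terms each; by the pigeonhole principle at least one block $T$ has the property that $p$ divides every term of $T$. Set $n'=n/p$ and let $T'$ be the sequence in $\mathbb Z_{n'}$ obtained by dividing the terms of $T$ by $p$. Every prime divisor of $n'$ divides $n$ and is hence at least $7$, and $\Omega(n')=\Omega(n)-1$, so $T'$ has length $3^{\Omega(n')}$ and the induction hypothesis gives a $U(n')^2$-weighted zero-sum subsequence of consecutive terms of $T'$. By the $U(n')^2$ assertion of Lemma~\ref{b}, the corresponding subsequence of $T$ is a $U(n)^2$-weighted zero-sum subsequence of consecutive terms, and since $T$ consists of consecutive terms of $S$ we are done.

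The argument is largely routine once the machinery is in place; the points needing care are the bookkeeping in the second case (checking that the extracted block $T$ has length exactly $3^{\Omega(n')}$ and that $n'=n/p$ still has all prime divisors at least $7$, which is automatic since they form a subset of those of $n$) and invoking Lemma~\ref{cm} in its prime-power form $n=p^r$ rather than only for primes. Any genuine difficulty is concentrated inside Lemma~\ref{cm} itself (the arithmetic of quadratic residues over $\mathbb Z_{p^r}$), which we treat here as a black box; combined with Corollary~\ref{sq'}, the theorem then gives $C_{U(n)^2}(n)=3^{\Omega(n)}$ when every prime divisor of $n$ is at least $7$.
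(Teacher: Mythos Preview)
Your proof is correct and follows essentially the same approach as the paper's own proof: induction on $\Omega(n)$, with the same two cases (every prime divisor has at least three coprime terms, handled via Lemma~\ref{cm} and Observation~\ref{obs2}; or some prime divisor has at most two coprime terms, handled by splitting $S$ into three equal blocks and applying Lemma~\ref{b} with the induction hypothesis). The only differences are cosmetic---you make the pigeonhole step and the verification that $n'$ inherits the prime-divisor hypothesis explicit, whereas the paper leaves these implicit.
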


\begin{proof}
Let $S=(x_1,\ldots,x_k)$ be a sequence in $\mathbb Z_n$ of length $k=3^{\Omega(n)}$. We want to show that $S$ has a $U(n^2)$-weighted zero-sum subsequence of consecutive terms. We will use induction on $\Omega(n)$. From Theorem \ref{q1} we see that for any prime $p\geq 3$ we have $C_{Q_p}(p)= 3$. 
Let us now assume that $\Omega(n)>1$.

\begin{case}
For any prime divisor $p$ of $n$ at least three terms of $S$ are coprime to $p$. 
\end{case}

Let $p$ be a prime divisor of $n$ and $v_p(n)=r$. Then $S^{(p)}$ has at least three units, where $S^{(p)}$ is as defined before Observation \ref{obs}. As $p$ is at least 7, by Lemma \ref{cm} we see that  $S^{(p)}$ is a $U(p^r)^2$-weighted zero-sum sequence in $\mathbb Z_{p^r}$. As this is true for every prime divisor of $n$, by Observation \ref{obs2} it follows that $S$ is a $U(n)^2$-weighted zero-sum sequence.

\begin{case}
There is a prime divisor $p$ of $n$ such that at most two terms of $S$ are coprime to $p$. 
\end{case}

By partitioning $S$ into three equal parts where each part has $k/3$ consecutive terms, we see that there is a subsequence $T$ of consecutive terms of $S$ of length $k/3$ such that $p$ divides every term of $T$. Let $n'=n/p$ and $T'$ denote the sequence in $\mathbb Z_{n'}$ whose terms are obtained by dividing the terms of $T$ by $p$. As $\Omega(n')=\Omega(n)-1$ and $T'$ is a sequence of length $3^{\Omega(n')}$ in $\mathbb Z_{n'}$, by the induction hypothesis it follows that $T'$ has a $U(n')^2$-weighted zero-sum subsequence of consecutive terms. By Lemma \ref{b} we see that $T$ has a $U(n)^2$-weighted zero-sum subsequence of consecutive terms. As $T$ is a subsequence of consecutive terms of $S$, it follows that $S$ has a $U(n)^2$-weighted zero-sum subsequence of consecutive terms. 
\end{proof}

\begin{cor}\label{B}
If every prime divisor of $n$ is at least 7, then $C_{U(n)^2}(n)=3^{\Omega(n)}$.
\end{cor}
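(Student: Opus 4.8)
The plan is to combine the lower bound from Corollary~\ref{sq'} with the upper bound from Theorem~\ref{sq3}. Since every prime divisor of $n$ is at least $7$, in particular $n$ is odd, so writing $n = 2^r m$ with $m$ odd forces $r = 0$ and $m = n$. Then Corollary~\ref{sq'} gives $C_{U(n)^2}(n) \geq 2^0 \cdot 3^{\Omega(n)} = 3^{\Omega(n)}$. On the other hand, the hypothesis on the prime divisors of $n$ is exactly what is needed to invoke Theorem~\ref{sq3}, which yields $C_{U(n)^2}(n) \leq 3^{\Omega(n)}$. Putting the two inequalities together gives $C_{U(n)^2}(n) = 3^{\Omega(n)}$.

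There is essentially no obstacle here: this corollary is a pure bookkeeping consequence of the two preceding results, and the only thing to check is that the hypotheses line up. The one point worth a sentence is the reduction that lets us apply Corollary~\ref{sq'}: one should note explicitly that ``every prime divisor of $n$ is at least $7$'' implies $2 \nmid n$, so the factor $2^r$ in the statement of Corollary~\ref{sq'} is trivial. After that, the proof is just a two-line sandwich argument.

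\begin{proof}
Since every prime divisor of $n$ is at least $7$, the integer $n$ is odd. Writing $n = 2^r m$ with $m$ odd, we get $r = 0$ and $m = n$, so Corollary~\ref{sq'} gives $C_{U(n)^2}(n) \geq 2^0 \cdot 3^{\Omega(n)} = 3^{\Omega(n)}$. On the other hand, Theorem~\ref{sq3} gives $C_{U(n)^2}(n) \leq 3^{\Omega(n)}$. Hence $C_{U(n)^2}(n) = 3^{\Omega(n)}$.
\end{proof}
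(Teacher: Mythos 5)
Your proof is correct and follows exactly the paper's route: it sandwiches $C_{U(n)^2}(n)$ between the lower bound of Corollary~\ref{sq'} (noting $r=0$ since $n$ is odd) and the upper bound of Theorem~\ref{sq3}. The explicit remark that the hypothesis forces $2\nmid n$ is a small but welcome clarification of the same argument.
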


\begin{proof}
This follows from Corollary \ref{sq'} and Theorem \ref{sq3}.
\end{proof}

\section{When $A=U(n)^3$}\label{cubes} 

Let $n=p_1^{r_1}p_2^{r_2}\ldots p_s^{r_s}$ where the $p_i$'s are distinct primes and $7\nmid n$.   
Consider the set $I=\{\,i:p_i\equiv 1~(mod~3)\,\}$. Let $n_1=\prod\,\{p_i^{r_i}:i\in I\}$ and let $n_2=n/n_1$. We will follow the notation  \framebox{  $n=n_1n_2$} throughout this section.

\begin{cor}\label{cub2'}
Let $m=7^rn$ where $7\nmid n$ and $n=n_1n_2$ where $n_1,n_2$ are as defined above. Then $C_{U(m)^3}(m)\geq 2^{\Omega(n_2)}3^{\Omega(n_1)}4^r$.
\end{cor}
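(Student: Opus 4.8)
The plan is to factor $m$ into its prime powers, apply Lemma \ref{multi} one prime at a time, and then substitute the values of $C_{U(p)^3}(p)$ obtained in the earlier sections. The preliminary observation is that whenever $m''\mid m'$ the map $f_{m',m''}$ is a ring homomorphism, so it carries units to units and cubes to cubes; hence $f_{m',m''}\big(U(m')^3\big)\subseteq U(m'')^3$. This is exactly the containment hypothesis required to feed the pair $\big(U(m')^3, U(m'')^3\big)$ into Lemma \ref{multi}.

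With this in hand I would prove, by induction on $\Omega(m)$, the inequality $C_{U(m)^3}(m)\geq\prod_q c(q)$, where the product is over the prime divisors of $m$ taken with multiplicity and $c(q)=4$ if $q=7$, $c(q)=3$ if $q\equiv 1~(mod~3)$ with $q\neq 7$, and $c(q)=2$ otherwise. The base case $\Omega(m)=1$ is the assertion that $C_{U(q)^3}(q)=c(q)$ for a prime $q$: when $q=7$ this is Lemma \ref{c7}; when $q\equiv 1~(mod~3)$ and $q\neq 7$ it is Theorem \ref{cubp13}; and when $q\not\equiv 1~(mod~3)$ we have $U(q)^3=U(q)=\mathbb Z_q\setminus\{0\}$, so $C_{U(q)^3}(q)=2$ by Theorem \ref{nz}. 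For the inductive step with $\Omega(m)\geq 2$, choose a prime divisor $p$ of $m$ and apply Lemma \ref{multi} to the factorization $m=p\cdot(m/p)$ with $A=U(m)^3$, $A_1=U(p)^3$, $A_2=U(m/p)^3$; since the containment hypotheses hold by the preliminary step, this gives $C_{U(m)^3}(m)\geq c(p)\,C_{U(m/p)^3}(m/p)$, and the induction hypothesis applied to $m/p$ (which has $\Omega(m/p)=\Omega(m)-1$) completes the step.

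It then remains to evaluate $\prod_q c(q)$ for $m=7^r n$ with $n=n_1n_2$. Since $7\nmid n$, the prime $7$ occurs in $m$ with multiplicity exactly $r$, and no prime dividing $n_1$ or $n_2$ equals $7$; by definition the primes dividing $n_1$ are exactly those dividing $n$ that are $\equiv 1~(mod~3)$, and the primes dividing $n_2$ are the remaining ones. Since $\Omega(m)=r+\Omega(n_1)+\Omega(n_2)$, grouping the factors of $\prod_q c(q)$ according to these three classes yields $4^r\cdot 3^{\Omega(n_1)}\cdot 2^{\Omega(n_2)}$, which is the claimed lower bound for $C_{U(m)^3}(m)$.

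I do not expect a genuine obstacle here: Lemma \ref{multi} does the heavy lifting and the three cited results supply the base cases. The only point deserving explicit care is the bookkeeping of the induction together with the observation that $7$ lies in neither $n_1$ nor $n_2$, so the three contributions $4^r$, $3^{\Omega(n_1)}$, $2^{\Omega(n_2)}$ account for disjoint sets of prime factors and no prime is counted twice or with the wrong constant.
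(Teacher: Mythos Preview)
Your proof is correct and follows essentially the same approach as the paper's: both invoke Lemma \ref{multi} inductively on $\Omega(m)$, using the containment $f_{m',m''}(U(m')^3)\subseteq U(m'')^3$ together with the three base values $C_{U(p)^3}(p)=2,3,4$ supplied by Theorem \ref{nz}, Theorem \ref{cubp13}, and Lemma \ref{c7}. Your write-up is somewhat more explicit in the bookkeeping (introducing $c(q)$ and tracking the disjointness of the three prime classes), but the argument is the same.
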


\begin{proof}
In Theorem \ref{cubp13} we have seen that $C_{U(p)^3}(p)=3$ when $p\equiv 1~(mod~3)$ and $p\neq 7$. In Lemma \ref{c7} we have seen that $C_{U(7)^3}(7)=4$. If $p\not\equiv 1~(mod~3)$, then $U(p)^3=U(p)=\mathbb Z_p\setminus\{0\}$ and so by Theorem \ref{nz} we have $C_{U(p)^3}(p)=2$. Thus, this result now follows from Lemma \ref{multi} by induction on $\Omega(n)$ and by using the fact that if $d$ is a divisor of $n$, then $f_{n,d}(U(n)^3)\subseteq U(d)^3$.
\end{proof}

\begin{thm}\label{cub4'}
Let $n$ be a squarefree number which is not divisible by 2, 7 or 13. Then $C_{U(n)^3}(n)\leq 2^{\Omega(n_2)}3^{\Omega(n_1)}$. 
\end{thm}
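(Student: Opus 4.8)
The plan is to mimic the structure of the proofs of Theorem~\ref{unit2} and Theorem~\ref{sq3}, proceeding by induction on $\Omega(n)$ and splitting into two cases according to whether, for every prime divisor $p$ of $n$, sufficiently many terms of the sequence are coprime to $p$. Concretely, let $S=(x_1,\ldots,x_k)$ be a sequence in $\mathbb Z_n$ of length $k=2^{\Omega(n_2)}3^{\Omega(n_1)}$, and pick any prime divisor $p$ of $n$. Because $n$ is squarefree and not divisible by $2,7,13$, the factor $p$ is either a prime $\equiv 1\ (\mathrm{mod}\ 3)$ with $p\neq 7,13$, in which case $U(p)^3=Q_p'$ is the index-$3$ subgroup, or a prime $\not\equiv 1\ (\mathrm{mod}\ 3)$ with $U(p)^3=U(p)=\mathbb Z_p\setminus\{0\}$. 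The threshold number of units we demand in the ``good'' case is $3$ if $p\mid n_1$ and $2$ if $p\mid n_2$.

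The base case $\Omega(n)=1$ is exactly Theorem~\ref{cubp13} (for $p\equiv 1\ (\mathrm{mod}\ 3)$, $p\neq 7,13$) and Theorem~\ref{nz} (for $p\not\equiv 1\ (\mathrm{mod}\ 3)$). For the inductive step, in Case~1 I would argue: for every prime divisor $p$ of $n$, the image sequence $S^{(p)}$ in $\mathbb Z_p$ has at least $3$ units when $p\mid n_1$ and at least $2$ units when $p\mid n_2$; hence by Lemma~\ref{cub5} (note $p\neq 7,13$) in the first sub-case, and by the trivial argument that two nonzero elements of $\mathbb Z_p$ give a weighted zero-sum in the second sub-case, $S^{(p)}$ is a $U(p)^3$-weighted (equivalently $A_p^3$-weighted, since $n$ is squarefree so $v_p(n)=1$) zero-sum sequence. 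Then Observation~\ref{obs2} assembles these into the statement that $S$ itself is $U(n)^3$-weighted zero-sum, so in particular $S$ has a $U(n)^3$-weighted zero-sum subsequence of consecutive terms (namely all of $S$).

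In Case~2 there is a prime divisor $p$ of $n$ such that fewer than the threshold number of terms of $S$ are coprime to $p$; so at most $1$ term if $p\mid n_2$ and at most $2$ terms if $p\mid n_1$. Writing $k=2^{\Omega(n_2)}3^{\Omega(n_1)}$, I partition $S$ into $2$ (resp.\ $3$) equal blocks of consecutive terms of length $k/2$ (resp.\ $k/3$); by pigeonhole one such block $T$ consists entirely of multiples of $p$. Set $n'=n/p$; then $n'$ is again squarefree, not divisible by $2,7,13$, and $\Omega(n')=\Omega(n)-1$, while the block length $k/2$ or $k/3$ equals $2^{\Omega(n_2')}3^{\Omega(n_1')}$ for the corresponding factorisation $n'=n_1'n_2'$ — one checks that dividing by $p$ removes exactly one factor of $2$ or of $3$ from $k$ in the appropriate place. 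By the induction hypothesis the sequence $T'$ in $\mathbb Z_{n'}$ (terms of $T$ divided by $p$) has a $U(n')^3$-weighted zero-sum subsequence of consecutive terms. A version of Lemma~\ref{b} for cubes — which follows by the same argument as Lemma~\ref{b}, using Lemma~\ref{d} and $v_p(n)=1$ — lifts this to a $U(n)^3$-weighted zero-sum subsequence of consecutive terms of $T$, hence of $S$.

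The main obstacle I anticipate is the bookkeeping that keeps the exponent split $n=n_1n_2$ consistent under the reduction $n\mapsto n'=n/p$: one must verify that removing the prime $p$ drops a factor $3$ from $k$ when $p\equiv 1\ (\mathrm{mod}\ 3)$ and a factor $2$ otherwise, and that this matches precisely the threshold ($3$ vs.\ $2$) used to define the two cases, so that the block length is exactly $2^{\Omega(n_2')}3^{\Omega(n_1')}$ and the induction applies. A secondary point needing care is that the excerpt's Lemma~\ref{b} is stated only for $U(n)$ and $U(n)^2$, so I must either cite it as also holding for $U(n)^3$ (the proof is verbatim the same) or record that extension explicitly; since $n$ is squarefree here, the hypothesis $v_p(n)=1$ makes the argument especially clean, as $S^{(p)}$ lives in $\mathbb Z_p$ and $A_p^3=U(p)^3$ directly.
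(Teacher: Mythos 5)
Your proposal is correct and follows essentially the same route as the paper: induction on $\Omega(n)$, the case split by whether each prime divisor $p$ of $n_1$ (resp.\ $n_2$) sees at least three (resp.\ two) coprime terms, Lemma~\ref{cub5} plus the $U(p)^3=U(p)$ observation (the paper invokes Lemma~\ref{gri} where you give the equivalent elementary argument) assembled via Observation~\ref{obs2}, and in the deficient case the pigeonhole block of length $k/3$ or $k/2$, the bookkeeping $n_1'=n_1/p$ or $n_2'=n_2/p$, and the cube analogue of Lemma~\ref{b} — exactly as in the paper, which likewise just says ``replace squares by cubes.'' The only cosmetic differences are that you handle the base case by citing Theorems~\ref{cubp13} and~\ref{nz} rather than redoing it from Lemmas~\ref{cub5} and~\ref{gri}, and you merge the paper's two deficient-prime cases into one.
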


\begin{proof}
Let $S$ be a sequence in $\mathbb Z_n$ of length $k=2^{\Omega(n_2)}3^{\Omega(n_1)}$. We want to show that $S$ has a $U(n)^3$-weighted zero-sum subsequence of consecutive terms. We now prove this theorem by induction on $\Omega(n)$.

Suppose $n=p$ where $p$ is a prime. If $p\equiv 1~(mod~3)$ then $n=n_1$, and by using Lemma \ref{cub5} we can show that $C_{U(n)^3}(n)\leq 3$. If $p\not\equiv 1~(mod~3)$ then $n=n_2$, and by using Lemma \ref{gri} we can show that $C_{U(n)^3}(n)\leq 2$. Let us now assume that $\Omega(n)>1$.

\begin{case}
For any prime divisor $p$ of $n_1$ at least three terms of $S$ are coprime to $p$, and for any prime divisor $p$ of $n_2$ at least two terms of $S$ are coprime to $p$. 
\end{case}
 
Let $p$ be a prime divisor of $n$ and $S^{(p)}$ be as defined before Observation \ref{obs}. If $p$ divides $n_1$ then $S^{(p)}$ has at least three units. So by Lemma \ref{cub5} we get that $S^{(p)}$ is a $U(p)^3$-weighted zero-sum sequence in $\mathbb Z_p$ as $p\neq 7,13$. If $p$ divides $n_2$ then $S^{(p)}$ has at least two units. So by Lemma \ref{gri} we get that $S^{(p)}$ is a $U(p)$-weighted zero-sum sequence in $\mathbb Z_p$ as $p\neq 2$.

We have seen that when $p\not \equiv 1~(mod~3)$ we have $U(p)^3=U(p)$. Thus, for every prime divisor $p$ of $n$ we get that $S^{(p)}$ is a $U(p)^3$-weighted zero-sum sequence in $\mathbb Z_p$ and so by Observation \ref{obs2} we see that $S$ is a $U(n)^3$-weighted zero-sum sequence. 

\begin{case}
There is a prime divisor $p$ of $n_1$ such that at most two terms of $S$ are coprime to $p$. 
\end{case}

Let $n'=n/p$. If we write $n'$ as $n_1'n_2'$ as per the notation given at the beginning of this section, it follows that $n_1'=n_1/p$ and $n_2'=n_2$. By partitioning $S$ into three equal parts where each part has $k/3$ consecutive terms, we see that there is a subsequence $T$ which has consecutive terms of $S$ and length $k/3$ such that $p$ divides every term of $T$. Let $T'$ denote the sequence in $\mathbb Z_{n'}$ whose terms are obtained by dividing the terms of $T$ by $p$.

Then $T'$ is a sequence of length $k/3=2^{\Omega(n_2)}3^{\Omega(n_1)-1}=2^{\Omega(n'_2)}3^{\Omega(n'_1)}$ in $\mathbb Z_{n'}$. As $n'$ is squarefree and is not divisible by 2, 7 or 13 and as $\Omega(n')=\Omega(n)-1$, by the induction hypothesis we see that $T'$ has a $U(n')^3$-weighted zero-sum subsequence of consecutive terms. By a similar argument as in Lemma \ref{b} (where we replace squares by cubes), we see that $T$ has a $U(n)^3$-weighted zero-sum subsequence of consecutive terms. As $T$ is a subsequence of consecutive terms of $S$, it follows that $S$ has a $U(n)^3$-weighted zero-sum subsequence of consecutive terms.

\begin{case}
There is a prime divisor $p$ of $n_2$ such that at most one term of $S$ is coprime to $p$. 
\end{case}

Let $n'=n/p$. It follows that $n_1'=n_1$ and $n_2'=n_2/p$. By partitioning $S$ into two equal parts where each part has $k/2$ consecutive terms, we see that there is a subsequence $T$ which has  consecutive terms of $S$ and length $k/2$ such that $p$ divides every term of $T$. Let $T'$ denote the sequence in $\mathbb Z_{n'}$ whose terms are obtained by dividing the terms of $T$ by $p$.

Then $T'$ is a sequence of length $k/2=2^{\Omega(n_2)-1}3^{\Omega(n_1)}=2^{\Omega(n'_2)}3^{\Omega(n'_1)}$ in $\mathbb Z_{n'}$. By a similar argument as in the previous case we see that $S$ has a $U(n)^3$-weighted zero-sum subsequence of consecutive terms. 
\end{proof}

\begin{cor}\label{cub4''}
Let $n$ be a squarefree number which is not divisible by 2, 7 or 13. Then $C_{U(n)^3}(n)=2^{\Omega(n_2)}3^{\Omega(n_1)}$. 
\end{cor}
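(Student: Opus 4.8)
The plan is to obtain the claimed value by sandwiching $C_{U(n)^3}(n)$ between a matching lower bound and upper bound, both of which have already been established in the preceding results. Since $n$ is not divisible by $7$, writing $n = 7^r n'$ with $7 \nmid n'$ forces $r = 0$ and $n' = n$; thus Corollary \ref{cub2'} applies directly and yields the lower bound $C_{U(n)^3}(n) \geq 2^{\Omega(n_2)} 3^{\Omega(n_1)}$.

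For the reverse inequality, I would simply invoke Theorem \ref{cub4'}: since $n$ is squarefree and not divisible by $2$, $7$, or $13$, that theorem gives $C_{U(n)^3}(n) \leq 2^{\Omega(n_2)} 3^{\Omega(n_1)}$. Combining the two inequalities gives $C_{U(n)^3}(n) = 2^{\Omega(n_2)} 3^{\Omega(n_1)}$, which is exactly the assertion.

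There is essentially no obstacle here: the corollary is a one-line consequence of results proved earlier in the section, and the only thing to check is that the hypotheses of both Corollary \ref{cub2'} (taking $r = 0$) and Theorem \ref{cub4'} are met under the stated assumption that $n$ is squarefree and coprime to $2$, $7$, and $13$. The real content of the matching value lies in Theorem \ref{cub4'} — in particular its Case 1, which relies on Lemma \ref{cub5} (requiring $p \neq 7, 13$) and Lemma \ref{gri} (requiring $p \neq 2$) — and in the multiplicative lower bound of Lemma \ref{multi}; the corollary itself is just the bookkeeping step that records the conclusion.

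\begin{proof}
This follows from Corollary \ref{cub2'} and Theorem \ref{cub4'}.
\end{proof}
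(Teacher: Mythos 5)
Your proof is correct and coincides with the paper's own argument, which likewise combines the lower bound of Corollary \ref{cub2'} (with $r=0$, as you note) with the upper bound of Theorem \ref{cub4'}. Nothing further is needed.
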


\begin{proof}
This follows from Corollary \ref{cub2'} and Theorem \ref{cub4'}.  
\end{proof}

\begin{rem}
By Corollary \ref{cub2'} we see that the conclusions of Theorem \ref{cub4'} and Corollary \ref{cub4''} are not true when $n$ is divisible by 7. 
\end{rem}

We will now give an upper bound for $C_{U(n)^3}(n)$ when $n$ is not squarefree, for which we need the following (\cite{ss}, Lemma 4).

\begin{lem}\label{cub3}
Let $n=p^r$ where $p$ is a prime such that $p\geq 13$ and $p\equiv 1~(mod~3)$. Let $S$ be a sequence in $\mathbb Z_n$ such that at least four terms of $S$ are units. Then $S$ is a $U(n)^3$-weighted zero-sum sequence.   
\end{lem}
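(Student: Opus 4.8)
The plan is to prove the statement by induction on $r$, after recording that $A:=U(p^r)^3$ is a subgroup of index $3$ in $U(p^r)$ (the cubing map on $U(p^r)$ has kernel of order $\gcd(3,p-1)=3$ since $p\equiv 1\pmod 3$ and $p\neq 3$); in particular $\pm 1\in A$.

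\emph{Base case $r=1$.} Here $\mathbb Z_p$ is a field, $|A|=(p-1)/3$, and every non-unit term of $S$ equals $0$, so it suffices to find weights in $A$ for the $\ell\ge 4$ unit terms $x_1,\dots,x_\ell$ summing to $0$; equivalently, to show $0\in Ax_1+\dots+Ax_\ell$. I would split these units into two blocks, each containing at least two of them (possible as $\ell\ge 4$), and let $\Sigma_1,\Sigma_2\subseteq\mathbb Z_p$ be the associated weighted sumsets $\textstyle\sum Ax_i$. Each $\Sigma_j$ is a sumset of at least two subsets of $\mathbb Z_p$ of size $(p-1)/3$, so by the Cauchy--Davenport theorem (Theorem \ref{cd}) either $\Sigma_j=\mathbb Z_p$ or $|\Sigma_j|\ge 2(p-1)/3-1$, and the latter quantity exceeds $p/2$ precisely because $p\ge 13$. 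Then $|\Sigma_1|+|{-\Sigma_2}|>p$ forces $\Sigma_1\cap(-\Sigma_2)\neq\emptyset$, giving $s\in\Sigma_1$ with $-s\in\Sigma_2$, hence $0\in\Sigma_1+\Sigma_2$. Reading off the weights (and attaching the weight $1\in A$ to every non-unit term) finishes this case.

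\emph{Inductive step.} For $r\ge 2$ put $n'=p^{r-1}$. The reduction $f_{n,n'}$ maps $U(p^r)$ onto $U(p^{r-1})$, and since it is a ring homomorphism it commutes with cubing, hence maps $A$ onto $U(p^{r-1})^3$; moreover it sends the $\ge 4$ unit terms of $S$ to $\ge 4$ unit terms of the reduced sequence $\bar S$. By the induction hypothesis $\bar S$ is $U(p^{r-1})^3$-weighted zero-sum, say $\sum_i\bar a_i\bar s_i=0$ in $\mathbb Z_{p^{r-1}}$ with $\bar a_i\in U(p^{r-1})^3$. I would lift each $\bar a_i$ to some $a_i^{(0)}\in A$ (lift a cube root of $\bar a_i$ to $U(p^r)$ and cube it), so that $\sum_i a_i^{(0)}s_i=p^{r-1}m$ in $\mathbb Z_{p^r}$ for some integer $m$, and then kill the error term $p^{r-1}m$ by perturbing the weight of a single unit term $s_j$: replacing $a_j^{(0)}$ by $a_j^{(0)}(1+p^{r-1}\epsilon)^3$ keeps the weight in $A$, and since $r\ge 2$ one has $(1+p^{r-1}\epsilon)^3=1+3p^{r-1}\epsilon$ in $\mathbb Z_{p^r}$, so the sum changes by $3p^{r-1}\epsilon\,a_j^{(0)}s_j$. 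As $a_j^{(0)}s_j$ is a unit and $3$ is invertible mod $p$, a suitable choice of $\epsilon$ makes this change equal $-p^{r-1}m$, yielding a $U(p^r)^3$-weighted zero-sum representation of $S$ and completing the induction.

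\emph{Where the difficulty lies.} The inductive step is a routine Hensel-type lift and uses only one unit term, so essentially all the content is in the base case, where the crux is the Cauchy--Davenport size estimate: the hypotheses $p\ge 13$ and ``at least four units'' are exactly what is needed to make the two blocks of units each produce a sumset of size more than $p/2$, forcing the overlap (for $p=7$ one gets $2(p-1)/3-1=3<p/2$, and the statement indeed fails there). A small point to be careful about is that in the lifting step one perturbs only weights attached to unit terms, so the perturbed weights remain inside the group $A$.
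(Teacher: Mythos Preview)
Your argument is correct. Note, however, that the paper does not supply its own proof of this lemma: it is quoted verbatim from an external source (\cite{ss}, Lemma~4), so there is no in-paper proof to compare against.

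That said, your approach is the natural one for results of this type. In the base case the arithmetic is exactly right: with two blocks of at least two units each, Cauchy--Davenport gives $|\Sigma_j|\ge 2(p-1)/3-1$, whence $|\Sigma_1|+|\Sigma_2|\ge (4p-10)/3$, and this exceeds $p$ precisely when $p>10$; combined with $p\equiv 1\pmod 3$ this forces $p\ge 13$, matching the hypothesis. In the inductive step your Hensel-type perturbation is clean, and the point you flag---that one must perturb the weight of a \emph{unit} term $s_j$ so that $3\epsilon\,a_j^{(0)}s_j$ ranges over all of $\mathbb Z_p$---is exactly what makes the correction work. The surjectivity of $f_{n,n'}\colon U(p^r)\to U(p^{r-1})$ you invoke is Lemma~\ref{d} of the paper, and the lift of cube weights via ``lift a cube root and cube it'' is the right way to ensure the lifted weights stay in $U(p^r)^3$.
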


\begin{rem}
Let $p$ be a prime and $r\geq 2$. As $|U(p^r)|=p^{r-1}(p-1)$, we see that if $p\equiv 2~(mod~3)$ then $3$ does not divide $|U(p^r)|$. So the homomorphism from $U(p^r)\to U(p^r)$ given by $x\mapsto x^3$ has trivial kernel and hence it is onto. Thus we have $U(p^r)^3=U(p^r)$.
\end{rem}

\begin{cor}\label{cub3'}
Let $n=p^r$ where $p$ is an odd prime such that $p\neq 3,7$. Let $S$ be a sequence in $\mathbb Z_n$ such that at least four elements of $S$ are units. Then $S$ is a $U(n)^3$-weighted zero-sum sequence.   
\end{cor}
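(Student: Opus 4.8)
The plan is to split into two cases according to the residue of $p$ modulo $3$; since $p$ is a prime with $p\neq 3$, exactly one of $p\equiv 1~(mod~3)$ or $p\equiv 2~(mod~3)$ holds.

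First suppose $p\equiv 2~(mod~3)$. Then $3$ does not divide $|U(p^r)|=p^{r-1}(p-1)$, so the cubing homomorphism $x\mapsto x^3$ on $U(p^r)$ is injective, hence onto, and therefore $U(p^r)^3=U(p^r)$. (For $r=1$ this is the observation made at the beginning of the section ``When $A=U(p)^3$ where $p$ is a prime''; for $r\geq 2$ it is the Remark immediately preceding this corollary.) Since $S$ has at least four units, it has at least two units, so by Lemma \ref{gri} the sequence $S$ is a $U(p^r)$-weighted zero-sum sequence in $\mathbb Z_{p^r}$, which is the same thing as a $U(p^r)^3$-weighted zero-sum sequence.

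Now suppose $p\equiv 1~(mod~3)$. Since $p$ is a prime with $p\neq 3,7$ and the only prime congruent to $1$ modulo $3$ that is smaller than $13$ is $7$, we must have $p\geq 13$. Hence Lemma \ref{cub3} applies directly: as $S$ has at least four units, $S$ is a $U(p^r)^3$-weighted zero-sum sequence. Combining the two cases proves the corollary.

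The only step requiring any care is the identification $U(p^r)^3=U(p^r)$ in the first case, which has to be quoted from two separate places in the text (the $r=1$ case and the $r\geq 2$ case); beyond this there is no genuine obstacle, since the statement is essentially an assembly of Lemmas \ref{gri} and \ref{cub3} together with the elementary fact that a prime $\equiv 1~(mod~3)$ which is neither $3$ nor $7$ is at least $13$.
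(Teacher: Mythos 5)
Your proof is correct and follows exactly the paper's route: split on $p$ modulo $3$, use $U(p^r)^3=U(p^r)$ together with Lemma \ref{gri} when $p\equiv 2~(mod~3)$, and invoke Lemma \ref{cub3} when $p\equiv 1~(mod~3)$. Your explicit observation that $p\neq 3,7$ forces $p\geq 13$ in the second case is exactly the (implicit) justification the paper relies on.
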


\begin{proof}
When $p\equiv 2~(mod~3)$, the result follows from Lemma \ref{gri} as $p$ is odd and we have $U(n)^3=U(n)$. When $p\equiv 1~(mod~3)$, the result follows from Lemma \ref{cub3} as $p\neq 7$.
\end{proof}

\begin{rem}
The conclusion of Corollary \ref{cub3'} is false when $p=2,3,7$. 

As $U(7)^3=\{1,-1\}$, the sequence $(1,1,1,1,1)$ in $\mathbb Z_7$ is not a $U(7)^3$-weighted zero-sum sequence.

As $U(9)^3=\{1,-1\}$, the sequence $(1,1,1,1,1)$ in $\mathbb Z_9$ is not a $U(9)^3$-weighted zero-sum sequence. 

As the sequence $(1,1,1,1,1)$ in $\mathbb Z_2$ is not a zero-sum sequence and as the image of $U(2^r)^3$ under $f_{2^r,2}$ is $\{1\}$, it follows that the  sequence $(1,1,1,1,1)$ in $\mathbb Z_{2^r}$ is not a $U(2^r)^3$-weighted zero-sum sequence.
\end{rem}

\begin{thm}\label{cub4}
If $n$ is not divisible by 2, 3 or 7, then $C_{U(n)^3}(n)\leq 2^{\Omega(n_2)}4^{\Omega(n_1)}$. 
\end{thm}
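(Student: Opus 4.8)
The plan is to run the same induction on $\Omega(n)$ that was used in Theorem \ref{cub4'}, but with the local bound at primes $p \equiv 1 \pmod 3$ now being $4$ instead of $3$ (since we can no longer rely on Lemma \ref{cub5} — which required squarefreeness in the sense that we only controlled the prime, not the prime power — and instead use Corollary \ref{cub3'}, which costs four units). So let $S$ be a sequence in $\mathbb Z_n$ of length $k = 2^{\Omega(n_2)}4^{\Omega(n_1)}$, and we want a $U(n)^3$-weighted zero-sum subsequence of consecutive terms.

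First I would dispose of the base case $\Omega(n) = 1$: if $n = p^r$ — wait, $\Omega$ counts prime factors with multiplicity, so $\Omega(n)=1$ forces $n = p$ a prime. If $p \equiv 1 \pmod 3$ (and $p \neq 7$, $p \neq 3$ by hypothesis), then $n = n_1$ and Corollary \ref{cub3'} with a length-$4$ sequence — actually at this point Lemma \ref{cub5} already gives the bound $3 \le 4$ since $p \neq 7, 13$; and if $p = 13$, Corollary \ref{cub3'} applies directly since $13 \neq 3, 7$ and $13 \equiv 1 \pmod 3$, giving the bound $4$. If $p \equiv 2 \pmod 3$, then $n = n_2$, $U(p)^3 = U(p)$, and Lemma \ref{gri} (or just Theorem \ref{nz}) gives the bound $2$. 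Actually the cleaner route for $\Omega(n) > 1$ is: the inductive step for a general prime power $p^r$ in the factorization is already subsumed by letting a single prime divisor $p$ play the role, so I would not separate out the "$n$ a prime power" base case beyond $\Omega(n)=1$.

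The inductive step splits into three cases exactly mirroring Theorem \ref{cub4'}, with the thresholds shifted. Case 1: for every prime divisor $p$ of $n_1$ at least four terms of $S$ are coprime to $p$, and for every prime divisor $p$ of $n_2$ at least two terms of $S$ are coprime to $p$. Then for each prime $p \mid n$, writing $v_p(n) = r$, the sequence $S^{(p)}$ over $\mathbb Z_{p^r}$ has enough units: if $p \mid n_1$, Corollary \ref{cub3'} (valid since $p \neq 3, 7$, $p$ odd) shows $S^{(p)}$ is $U(p^r)^3$-weighted zero-sum; if $p \mid n_2$, Lemma \ref{gri} shows $S^{(p)}$ is $U(p^r)$-weighted zero-sum, and $U(p^r)^3 = U(p^r)$ since $p \equiv 2 \pmod 3$ (this uses the remark preceding Corollary \ref{cub3'}). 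By Observation \ref{obs2}, $S$ itself is $U(n)^3$-weighted zero-sum. Case 2: some prime $p \mid n_1$ has at most three terms of $S$ coprime to $p$. Partition $S$ into four consecutive blocks of length $k/4$; some block $T$ has $p$ dividing every term. Set $n' = n/p$, so $n_1' = n_1/p$, $n_2' = n_2$, and $T'$ (terms of $T$ divided by $p$, viewed in $\mathbb Z_{n'}$) has length $k/4 = 2^{\Omega(n_2')}4^{\Omega(n_1')}$; by induction $T'$ has a $U(n')^3$-weighted zero-sum subsequence of consecutive terms, and the cube-analogue of Lemma \ref{b} lifts it to $T \subseteq S$. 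Case 3: some prime $p \mid n_2$ has at most one term coprime to $p$; partition $S$ into two consecutive blocks of length $k/2$, find a block $T$ killed by $p$, and apply induction to $T'$ of length $k/2 = 2^{\Omega(n_2')}4^{\Omega(n_1')}$ where $n_2' = n_2/p$, $n_1' = n_1$, again lifting via the cube-analogue of Lemma \ref{b}.

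The only genuine subtlety — and where I would be most careful — is the lifting lemma in the prime-power setting: Lemma \ref{b} as stated divides out a prime $p$ and passes from $\mathbb Z_{n'}$ back to $\mathbb Z_n$, and its proof rests on Lemma \ref{d} (lifting units along $f_{n,n'}$). When $n$ is not squarefree, $p$ may still divide $n' = n/p$, and Lemma \ref{d} is stated precisely to handle that, so the argument goes through verbatim for cubes; I would simply invoke "the same argument as in Lemma \ref{b}, replacing squares by cubes" as the paper already does in Theorem \ref{cub4'}. The other point worth a sentence is that the base/inductive bookkeeping on exponents — that $k/4$ and $k/2$ are exactly the quantities $2^{\Omega(n_2')}4^{\Omega(n_1')}$ for the reduced modulus $n'$ — is immediate from $\Omega$ being additive and from which of $n_1, n_2$ loses the factor $p$. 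No Cauchy–Davenport or $L$–$S$ input is needed here beyond what is already packaged inside Corollary \ref{cub3'} and Lemma \ref{gri}.
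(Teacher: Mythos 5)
Your proof is correct and follows essentially the same route as the paper: the same induction on $\Omega(n)$, the same case split with the unit-count thresholds raised to four at primes dividing $n_1$ (via Lemma \ref{cub3}, packaged as Corollary \ref{cub3'}) and kept at two at primes dividing $n_2$ (via Lemma \ref{gri} and $U(p^r)^3=U(p^r)$), and the same block-partition plus cube-analogue of Lemma \ref{b} for the inductive step. Your explicit handling of the base case $n=13$, where Lemma \ref{cub5} is unavailable and one falls back on Corollary \ref{cub3'} (or Theorem \ref{cubp13}), is in fact slightly more careful than the paper's bare cross-reference to the first paragraph of Theorem \ref{cub4'}.
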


\begin{proof}
Let $S$ be a sequence in $\mathbb Z_n$ of length $k=2^{\Omega(n_2)}4^{\Omega(n_1)}$. We want to show that $S$ has a $U(n)^3$-weighted zero-sum subsequence of consecutive terms. We now prove this theorem by induction on $\Omega(n)$. If $n$ is a prime, we use a similar argument as in the first paragraph of the proof of Theorem \ref{cub4'}. Let us now assume that $\Omega(n)>1$.

\begin{case}
For any prime divisor $p$ of $n_1$ at least four terms of $S$ are coprime to $p$, and for any prime divisor $p$ of $n_2$ at least two terms of $S$ are coprime to $p$.
\end{case}

Let $p$ be a prime divisor of $n$, $v_p(n)=r$ and $S^{(p)}$ be as defined before Observation \ref{obs}. If $p$ divides $n_1$, then $S^{(p)}$ has at least four units. So from Lemma \ref{cub3} we get that $S^{(p)}$ is a $U(p^r)^3$-weighted zero-sum sequence in $\mathbb Z_{p^r}$ as $p\neq 7$. If $p$ divides $n_2$, then $S^{(p)}$ has at least two units. So by Lemma \ref{gri} we get that $S^{(p)}$ is a $U(p^r)$-weighted zero-sum sequence in $\mathbb Z_{p^r}$ as $p\neq 2$. 

We have seen that when $p\equiv 2~(mod~3)$ we have  $U(p^r)^3=U(p^r)$. As $3\nmid n$, for every prime divisor $p$ of $n$ we get that $S^{(p)}$ is a $U(p^r)^3$-weighted zero-sum sequence in $\mathbb Z_{p^r}$. So by Observation \ref{obs2} we see that $S$ is a $U(p^r)^3 $-weighted zero-sum sequence. 

\begin{case}
There is a prime divisor $p$ of $n_1$ such that at most three terms of $S$ are coprime to $p$, or there is a prime divisor $p$ of $n_2$ such that at most one term of $S$ is coprime to $p$.
\end{case}

The proof of the result in this case is very similar to the proofs of the corresponding cases in Theorem \ref{cub4'}.
\end{proof}

\section{Concluding remarks}

In Corollary \ref{A} we have determined $C_A(n)$ for $A=U(n)$ when $n$ is odd. The corresponding result for an even integer can be investigated. It will also be interesting to see whether the lower bounds in Corollaries \ref{sq'} and \ref{cub2'} are the values of $C_A(n)$ for $A=U(n)^2$ and $A=U(n)^3$ respectively.

\bigskip

{\bf Acknowledgement.}
Santanu Mondal would like to acknowledge CSIR, Govt of India for a research fellowship. We would like to thank Dr. Subha Sarkar and Ms Shruti Hegde from RKMVERI for helpful discussions. We are grateful to the referees whose suggestions were helpful in improving the presentation of the paper.

\end{document}